\numberwithin{equation}{section}
\newcommand{\cl}{\mathcal}
\newcommand{\op}{\operatorname}
\newcommand{\llog}{\operatorname{log}}
\newcommand{\llogcr}{\operatorname{logcrys}}
\newtheorem{theorem}{Theorem}[section]
\newtheorem{remark}[theorem]{Remark}
\newtheorem{definition}[theorem]{Definition}
\newtheorem{lemma}[theorem]{Lemma}
\newtheorem{corollary}[theorem]{Corollary}
\newenvironment{claim}[1]{\par\noindent\textbf{Claim:}\space#1}{}
\title{A note on rank 1 log extendable isocrystals on non-proper varieties}
\date{}
\author{Efstathia Katsigianni\footnote{Freie Universit\"at Berlin, Arnimallee 3, 14195, Berlin, Germany.  Supported by Berlin Mathematical School.  Email: \texttt{katsief@zedat.fu-berlin.de}.}} 
\begin{document}
\maketitle
\begin{abstract}
In 2010 de Jong proposed a $p$-adic version of Gieseker's conjecture: if  $\pi_1^{\text{\'et}}(X)=1$, for $X$ smooth connected projective variety, then any isocrystal on $X$ is constant. This was proven by Esnault and Shiho \cite{ES1},\cite{ES2} under some additional assumptions. We show that the conjecture holds in the case of a non-proper variety with trivial tame fundamental group and  rank 1 log extendable isocrystals.
\end{abstract}  

\section{Introduction}
  
For a complex connected smooth projective variety $X$ one can define the \'etale fundamental group $\pi_1^{\text{\'et}}(X)$, which classifies all the finite \'etale coverings of $X$ and is isomorphic to the profinite completion of its topological fundamental group. On the other hand we have the category of $\mathcal O_X$-coherent $\mathcal D_X$-modules, which is equivalent via the Riemann-Hilbert correspondence to the category of finite dimensional representations of the topological fundamental group.  By a result of Mal\v{c}ev \cite{mal} and Grothendieck \cite{gro} we know how this category relates to  $\pi_1^{\text{\'et}}(X)$: if  $\pi_1^{\text{\'et}}(X)$ is trivial, then there are no non-constant $\mathcal O_X$-coherent $\mathcal D_X$-modules.

Naturally, one asks if the same holds true in positive characteristic. This conjecture was formulated by Gieseker \cite{gie} and proven by Esnault and Mehta in \cite{esnmeh}. 

In 2010 de Jong proposed a $p$-adic version of this conjecture: if  $\pi_1^{\text{\'et}}(X)=1$, then any isocrystal $\mathcal E\in \operatorname{Crys}(X/W)_{\mathbb Q}$ is constant.

This question was addressed in great detail in \cite{ES1} and in \cite{ES2}, where various subcases were considered. There the authors prove, among other results, that for a smooth connected projective variety over an algebraically closed field of positive characteristic, the triviality of the \'etale fundamental group implies that any convergent isocrystal, which is filtered so that the associated graded is a sum of rank 1 convergent isocrystals, is constant
 \cite[Theorem 0.1]{ES1}. In particular, it is also proven that rank 1 isocrystals (not necessarily convergent) are also trivial in this case  \cite[Theorem 1.2]{ES2}.
In the present work we want to extend the aforementioned results of \cite{ES1} to the case of a rank one log extendable  isocrystal on  a non-proper variety with trivial \'etale fundamental group. Actually we just need to assume that its abelianized tame fundamental group is trivial. More precisely, we prove the following:

\begin{theorem}\label{main}
  Let $U$ be a smooth connected variety over an algebraically closed field $k$ of positive characteristic, that admits a good compactification $X$, where  $X-U =\cup_{i\in I} Z_i=:Z$ is a simple strict normal crossings divisor. If $\pi_1^{\op{tame,ab}}(U)=1$ and $\mathcal L$ is a rank one isocrystal on $U$ that extends to a log isocrystal on $X$, equipped with the log structure coming from $Z$, then $\mathcal L$ is trivial.
\end{theorem}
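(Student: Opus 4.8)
The plan is to classify $\mathcal{L}$ by its residues along the boundary components and then reduce to the proper case of \cite{ES1,ES2}. Fixing a smooth lift $\mathcal{X}/W$ of $X$ (with $W=W(k)$), the log isocrystal is represented by a rank one module with integrable log connection $(L,\nabla)$, $\nabla\colon L\to L\otimes\Omega^1_{\mathcal{X}}(\log Z)$, and convergence forces each residue $\alpha_i:=\operatorname{res}_{Z_i}\nabla$ to lie in $\mathbb{Z}_p\subset K:=W_{\mathbb{Q}}$. Since residues add under tensor product, $\mathcal{L}\mapsto(\alpha_i)_{i\in I}$ defines a homomorphism from the group of rank one log isocrystals on $(X,Z)$ to $\bigoplus_{i\in I}\mathbb{Z}_p$; its kernel consists of those with all residues zero, i.e.\ objects whose connection has no genuine pole and hence extend to rank one isocrystals on the proper variety $X$.

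First I would pin down the residues using the Frobenius structure carried by any object of $\operatorname{Crys}(X/W)_{\mathbb{Q}}$. Compatibility of $\nabla$ with the crystalline Frobenius yields a congruence $q\,\alpha_i\equiv\alpha_i\pmod{\mathbb{Z}}$ for a power $q=p^{s}$, so that $(q-1)\alpha_i\in\mathbb{Z}$ and in particular $\alpha_i\in\mathbb{Z}_{(p)}$. Writing $\alpha_i=a_i/n_i$ with $\gcd(n_i,p)=1$, the denominator $n_i$ is precisely the order of the tame local monodromy of $\mathcal{L}|_U$ around $Z_i$: extracting an $n_i$-th root of a local equation of $Z_i$ produces a connected tame Kummer cover of $U$ on which the exponent becomes integral.

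Next I would invoke the hypothesis. The tame inertia along the components fits into the right exact sequence $\bigoplus_{i\in I}\widehat{\mathbb{Z}}^{(p')}\to\pi_1^{\operatorname{tame,ab}}(U)\to\pi_1^{\text{\'et,ab}}(X)\to 0$, and the local monodromies above assemble into a single character of $\pi_1^{\operatorname{tame,ab}}(U)$. As this group is trivial, that character is trivial, forcing every $n_i=1$ and hence $\alpha_i\in\mathbb{Z}$. I may then twist $\mathcal{L}$ by the bundles $\mathcal{O}_{\mathcal{X}}(Z_i)^{\otimes(-a_i)}$ with their canonical log connections---each of residue $1$ along $Z_i$ and canonically trivial on $U$---to arrange all residues to vanish. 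The twisted object now lies in the kernel above, i.e.\ is a rank one isocrystal on the proper variety $X$.

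The same sequence gives $\pi_1^{\text{\'et,ab}}(X)=1$, so \cite[Theorem~1.2]{ES2} applies on $X$ and the twisted isocrystal is trivial; restricting to $U$, where the twisting bundles are canonically trivial, yields $\mathcal{L}\cong\mathcal{O}_U$. The crux of the argument---and the step I expect to be hardest---is the middle one: setting up a clean dictionary between the $p$-adic residues of the log structure and the tame inertia characters of $\pi_1^{\operatorname{tame,ab}}(U)$. Concretely, one must verify that log extendability really does force the boundary monodromy to be tame, and that the Frobenius structure confines the exponents to $\mathbb{Z}_{(p)}$ rather than merely $\mathbb{Z}_p$; without this rationality there is no finite tame cover on which to linearize the exponent, and the fundamental-group hypothesis cannot bite.
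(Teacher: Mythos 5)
The decisive gap is your source of rationality for the residues. The congruence $q\,\alpha_i\equiv\alpha_i\pmod{\mathbb Z}$ presupposes an isomorphism $(F^{s})^*\mathcal L\cong\mathcal L$, but objects of $\operatorname{Crys}(U/W)_{\mathbb Q}$ carry no Frobenius structure, and $\mathcal L$ is not assumed convergent: the whole point of de Jong's conjecture, and of \cite{ES1}, \cite{ES2}, is to treat isocrystals without $F$-structure, where exponent-rationality theorems of Christol--Kedlaya type \cite{kedlI} are unavailable. Moreover the residues a priori lie in $W(k)\otimes\mathbb Q$ for an arbitrary algebraically closed $k$ (a ring far larger than $\mathbb Q_p$), so neither ``$\alpha_i\in\mathbb Z_p$ by convergence'' nor ``$(q-1)\alpha_i\in\mathbb Z$'' can be asserted; with them falls the passage to a finite tame cover, which you yourself identify as the crux. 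The paper manufactures exactly this rationality by a different mechanism: vanishing of the log crystalline Chern class of the value of a locally free lattice (Lemma \ref{chernlemma}), injectivity of $c_1^{rig}$ on $\operatorname{NS}(X)\otimes K$ \cite[Remark II 6.8.4]{ill} (using $\operatorname{Pic}(X)=\operatorname{NS}(X)$ and $H^0(U,\mathcal O_U^*)=k^*$, both consequences of the fundamental-group hypothesis), and the residue sequence of \cite[Section 6]{ABV} with coefficients in $\oplus W(k)[Z_i]$; intersecting $\oplus\mathbb Q[Z_i]$ with $\oplus W(k)[Z_i]$ then places $(L_X^{\llog})^{\otimes N}$ in $\oplus\mathbb Z_{(p)}[Z_i]$ (Lemma \ref{formalsum} and the remark following it). Nothing in your proposal replaces this Chern-class input.

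Several further steps are asserted rather than proved. A global smooth lift $\mathcal X/W$ of $X$ need not exist; the paper works with good embedding systems and PD envelopes instead. Your claim that ``all residues zero'' lets the object descend to an isocrystal on the proper $X$ is precisely exactness of the residue sequence at the middle term, i.e.\ the content of Section \ref{residuesection} after \cite{ABV}, which you would need to invoke. Your appeal to \cite[Theorem 1.2]{ES2} is made with only $\pi_1^{\operatorname{ab}}(X)=1$ in hand, whereas that theorem is stated under triviality of the full \'etale fundamental group; the paper is careful to note which inputs survive with only the abelianized hypothesis, and this would have to be checked. Finally, an isocrystal without $F$-structure has no monodromy representation, so identifying the denominators $n_i$ with orders of tame inertia characters of $\pi_1^{\operatorname{tame,ab}}(U)$ has no meaning as stated; in the paper the tameness hypothesis bites through torsion of the underlying line bundle (the Kummer cover of Lemma \ref{Kummer}), with the $p$-primary part handled separately via full faithfulness of $F^*$ on locally free crystals (Lemma \ref{pPower}, \cite[Ex. 7.3.4]{Ogus}). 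I would add that if the residue dictionary and the descent step could be repaired, your reduction to the proper case would genuinely bypass the paper's deformation argument (Theorem \ref{triviality}), which exists precisely because the paper cannot reduce to \cite{ES2} and must show directly that a log crystal with trivial value on $X$ is trivial over every $W_n$; so the architecture is attractive, but as it stands the two load-bearing steps are unsupported.
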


As a corollary we obtain also, with the previous notations and assumptions:
\begin{corollary}
  Unipotent log extendable isocrystals on $U$ are constant.
\end{corollary}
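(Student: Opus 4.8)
The plan is to reduce the statement to the vanishing of $\op{Ext}^1(\cl O,\cl O)$ in the category of log extendable isocrystals on $(X,Z)$, and then to compute this group by comparison with $\ell$-adic cohomology. A unipotent log extendable isocrystal $\cl E$ is by definition an iterated extension of the unit object $\cl O$: there is a filtration $0=\cl E_0\subset \cl E_1\subset\cdots\subset \cl E_n=\cl E$ with each graded piece $\cl E_i/\cl E_{i-1}\cong \cl O$. I would argue by induction on $n$. For the inductive step, $\cl E_{n-1}$ is constant by the inductive hypothesis, say $\cl E_{n-1}\cong\cl O^{n-1}$, and $\cl E$ is then classified by its extension class in $\op{Ext}^1(\cl O,\cl E_{n-1})\cong\op{Ext}^1(\cl O,\cl O)^{\oplus(n-1)}$; once we know $\op{Ext}^1(\cl O,\cl O)=0$ this class vanishes, so $\cl E\cong\cl O^{n}$ is constant. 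Equivalently, and more invariantly, the subcategory of unipotent objects is Tannakian with pro-unipotent fundamental group $G$, and $\op{Ext}^1(\cl O,\cl O)=\op{Hom}(G,\mathbb G_a)=(\op{Lie}G)^{\mathrm{ab},\vee}$; the vanishing of this group forces $\op{Lie}G=0$, hence $G=1$ and every unipotent object is trivial. Thus everything reduces to proving $\op{Ext}^1(\cl O,\cl O)=0$.

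Next I would identify this group cohomologically. Extensions of the trivial log isocrystal by itself are computed by first log crystalline cohomology, so $\op{Ext}^1(\cl O,\cl O)\cong H^1_{\llogcr}((X,Z)/W)_{\mathbb Q}$, where I use the equivalence between log extendable isocrystals on $U$ and log isocrystals on $(X,Z)$ that is already in force for the Theorem. By Shiho's comparison between log convergent and rigid cohomology for a good compactification with simple normal crossings boundary, this group is isomorphic to $H^1_{\op{rig}}(U/K)$, with $K=\op{Frac}(W)$. Hence it suffices to prove $H^1_{\op{rig}}(U/K)=0$.

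Finally I would deduce this vanishing from the tame fundamental group. The dimension of $H^1_{\op{rig}}(U/K)$ equals the first $\ell$-adic Betti number $\dim_{\mathbb Q_\ell}H^1_{\text{\'et}}(U,\mathbb Q_\ell)$ for $\ell\neq p$, by the comparison of rigid and $\ell$-adic Betti numbers for smooth varieties. Since $U$ is connected, $H^1_{\text{\'et}}(U,\mathbb Q_\ell)=\op{Hom}_{\mathrm{cont}}(\pi_1(U),\mathbb Q_\ell)$, and every such homomorphism factors through the abelianized pro-$\ell$ quotient of $\pi_1(U)$; as covers of $\ell$-power degree are tamely ramified, it factors through $\pi_1^{\op{tame,ab}}(U)=1$ and therefore vanishes. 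This gives $H^1_{\op{rig}}(U/K)=0$, and the induction above then completes the argument.

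I expect the main obstacle to lie in the second step rather than the last. The subtle point is that the conclusion of the Theorem, the triviality of every rank $1$ object, does \emph{not} formally imply the vanishing of $\op{Ext}^1(\cl O,\cl O)$: rank $1$ objects are governed by characters $G\to\mathbb G_m$, while self-extensions of $\cl O$ are governed by homomorphisms $G\to\mathbb G_a$, and the former can vanish while the latter does not. One must therefore re-enter the cohomological computation and re-use the hypothesis $\pi_1^{\op{tame,ab}}(U)=1$ directly, the delicate ingredients being the precise identification of $\op{Ext}^1$ in the log extendable category with $H^1_{\llogcr}$ and the two comparison isomorphisms (log convergent versus rigid, and rigid versus $\ell$-adic) invoked above.
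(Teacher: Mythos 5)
Your proposal is correct in outline and shares the paper's skeleton: reduce the corollary to the vanishing of $\op{Ext}^1(\cl O,\cl O)$ computed in the log category, identify this group with $H^1((X,M)/W,\cl O_{X/W})^{\llog}_{\op{crys}}\otimes\mathbb Q\simeq H^1_{rig}(U/K)$ via Shiho's comparison \cite{ShII}, and prove that the latter vanishes. Where you genuinely diverge is the final vanishing: you deduce $H^1_{rig}(U/K)=0$ from the equality of rigid and $\ell$-adic Betti numbers for smooth non-proper varieties together with the observation that continuous characters $\pi_1(U)\to\mathbb Q_\ell$ factor through the pro-$\ell$ abelianized quotient, which is tame since $\ell\neq p$. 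The paper instead stays entirely on the $p$-adic side: in Remark \ref{observation} it feeds $H^1_{\op{crys}}(X/W)=0$ (a consequence of $\pi_1^{\op{ab}}(X)=1$, via \cite{ES1} and \cite{Esfund}) and $\op{Pic}^0(X)=0$ into the residue exact sequence \eqref{abvexact} of \cite{ABV}, where the kernel of the Chern class map $\oplus W(k)[Z_i]\to H^2_{\op{crys}}(X/W)$ is $\op{Div}^0_Z(X)\otimes W(k)=0$. Your route avoids the residue-sequence bookkeeping but at the price of a heavy external input: the rigid--$\ell$-adic comparison for open $U$ is true but needs a precise reference, and for $H^1$ it is most easily extracted from the Gysin/residue sequence on both sides plus Katz--Messing on the proper $X$ --- which is essentially the paper's computation in disguise. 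So the two arguments buy the same thing; the paper's is self-contained given \cite{ABV}, yours outsources the key vanishing to a comparison theorem that you should pin down rather than invoke as folklore.

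Two smaller points. First, there is no ``equivalence between log extendable isocrystals on $U$ and log isocrystals on $(X,Z)$'' available in the paper, and restriction is in general not an equivalence without conditions on residues (twists of the type $\cl O_X(\sum a_iZ_i)$ with integral residues restrict trivially to $U$); accordingly your Tannakian reformulation with a pro-unipotent group $G$ should be replaced, as in the paper, by a concrete induction run in the log category: one uses that the graded quotients of the filtration are themselves log extendable, that their log extension classes lie in the vanishing group above, and one then checks --- this is the last paragraph of the paper's proof --- that exactness of the resulting sequences passes from log PD thickenings on $X$ to PD thickenings on $U$, so that splitness in $\op{I}_{\op{crys}}(X/W)^{\llog}$ descends to $\op{Crys}(U/W)_{\mathbb Q}$. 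Running the induction directly on $U$ would not work, since $H^1_{\op{crys}}(U/W)\otimes\mathbb Q$ is not known to vanish; only its log avatar is. Second, your closing remark is exactly right and matches the paper's structure: the rank-$1$ Theorem \ref{triviality} does not formally yield the Ext vanishing, which is why the corollary is routed through Remark \ref{observation} rather than through the theorem alone.
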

For the proof of our main result we adapt  mainly the strategy  \cite[Proposition 3.6]{ES1} and \cite[Sections 2.1 and  4]{ES2}.

In more detail, starting with a rank one isocrystal $\mathcal L$ on $U$, as in the assumptions of the theorem, we denote by $\cl{L}^{\llog}$ the log isocrystal on $X$ extending $\cl{L}$, by $L^{\llog}$ a locally free lattice of it and by $L$ the restriction of the latter to $U$. We first work on the value of $\cl{L}^{\llog}$ on $X$. It can be seen as a log crystal on $X/k$ (we omit here the reference to the log structures) and can be seen as a module on the log scheme $X$ with integrable quasi nilpotent log connection, which we denote by $(L^{\llog}_X,\nabla^{\llog}_X)$.

In Section \ref{residuesection} we adapt the discussion  of \cite[Section 6]{ABV} to our situation and obtain a residue exact sequence:
\begin{equation}\label{residueseq2}
H^1_{crys}(X/W,\mathcal O^*_{X/W})\to H^1((X,M)/W,\mathcal O^*_{X/W})_{\op{crys}}^{\llog} \to \oplus W(k)[Z_i] \to  H^2(X/W,\mathcal O^*_{X/W}).
\end{equation}
One also derives from (\ref{residueseq2}) that there is an injection \begin{equation} H^1((X,M)/W,\mathcal O^*)_{\op{crys}}^{\llog}\hookrightarrow  \oplus W(k)[Z_i].\end{equation}

We then prove that the log crystalline Chern class $c_1^{\op{logcrys}}(L^{\llog}_X)$ is equal to zero and using this we conclude in Lemma \ref{formalsum}, that the line bundle $L_X$ to some power $N$ is then of the form $\mathcal O_X(\sum a_i Z_i)$ with $\mathbb Q$-coefficients. To simplify the notations set $E^{\llog}:=(L^{\llog})^{\otimes N}$.

By (\ref{residueseq2}) we can then conclude that the map $H^1((X,M)/W,\mathcal O^*)_{\op{crys}}^{\llog}\to H^1(X,\mathcal O^*_X)$ is injective, hence that the log crystal $(E^{\llog}_X,\nabla^{\llog}_X)$ is actually ``controlled'' by the value of the sheaf $E^{\llog}_X$.

Using this and the previous observations we get that the restriction of $(E^{\llog}_X,\nabla^{\llog}_X)$ to $U$ is the trivial module with the trivial connection. It then follows that we can assume $(E^{\llog}_X,\nabla^{\llog}_X)$ to be isomorphic to $(\cl{O}_X^{\llog},d^{\llog})$.

In Theorem \ref{triviality} we then use a deformation argument (adapted from \cite{ES1}) to prove that $L^{\otimes n}$ is the trivial crystal and conclude that $L$ and therefore also $\mathcal L$ are trivial, using Lemmas \ref{Kummer} and \ref{pPower}.

As a corollary of this theorem and using the observations made in Section \ref{residuesection}, we also obtain in Corollary \ref{exttrivial} that extensions in $\op{I}_{\op{crys}}(X/W)^{\llog}$ of the trivial log isocrystal by itself are constant and so isocrystals on the open variety whose extensions are of this form, are also constant. Therefore also log extendable unipotent isocrystals on $U$, i.e. isocrystals that admit a filtration with the property that the associated quotients are successive extensions of the trivial isocrystal, are also constant.

Finally, we would like to point out the steps of the proof where the triviality of $\pi_1^{\op{tame,ab}}(U)$. From (\ref{tamedecomp}) we obtain that then also $\pi^{\op{ab}}(X)=1$. In  Lemma \ref{formalsum} we use that the maximal pro-$l$-quotient $\pi_1^{\op{ab},(l)}(U)$ is trivial for $l\neq p$ to obtain that $H^0(U,\mathcal O_U^*)=k^*$, while $\pi^{\op{ab}}(X)=1$ also implies $\operatorname{NS}(X)=\operatorname{Pic}(X)$. For the observations of Section \ref{residuesection}, the triviality of   $\pi_1^{\op{tame,ab}}(U)$ also suffices. Indeed, there we use that $H^1_{\op{crys}}(X/W)=H^1_{\op{crys}}(X/W,\mathcal O_X^*) =0 $, from \cite[Theorem 0.1.1 and Proposition 2.9]{ES1}  and \cite[Theorem 5.1]{ES2}. The triviality of the abelianized tame fundamental group is indeed enough to derive these equalities. The same relations are also mainly used in Theorem \ref{triviality} together with the fact that $\operatorname{NS}(X)=\operatorname{Pic}(X)$. To conclude Section \ref{residuesection} and for Lemma \ref{Kummer} we use that Kummer coverings of $U$ are trivial, thus we have to assume that $\pi_1^{\op{tame,ab}}(U)=1$, since $k$ is algebraically closed.

 \medskip 
 
 {\it Acknowledgements: }This work is part of my PhD project at the Freie Universit\"at Berlin. I would like to thank my advisor Prof. H\'el\`ene Esnault for her guidance and constant support. I would also like to thank Prof. Tomoyuki Abe and Prof. Atsushi Shiho for their most valuable advice and for the very helpful discussions we had during their visits in Berlin. In this second version of the article, I used some very useful suggestions of a first reviewer of the article to improve the presentation.

\section{Preliminaries and notation}
Let  $U$ be a  smooth connected variety over an algebraically closed field $k$ with the property that $\pi_1^{\op{tame,ab}}(U)=1$ and such that it admits a good compactification $ X$, where  $X-U =\cup_{i\in I} Z_i=:Z$ is a simple strict normal crossings divisor. Denote by $W$ the Witt ring of $k$ and by $K$ the fraction field of $W$.

Note the following fact regarding the abelianization of the tame fundamental group of $U$, see \cite{Sza}:
\begin{equation}
\pi_1^{\op{tame,ab}}(U)  = \pi_1^{\op{ab}}(U)(p')\oplus \pi_1^{\op{ab}}(X)(p)\label{tamedecomp},
\end{equation}
where $(p)$ and $(p')$ denote the maximal pro-$p$ and prime-to-$p$ quotients respectively. % This relation follows from the decomposition of $\pi_1^{tame,ab}(U)$ to its pro-$p$ and prime-to-$p$ quotients and the fact that a p-cover of U lifts to a p- cover of X.

From this decomposition we can deduce that if $\pi_1^{\op{tame,ab}}(U) = 1 $ then in particular $\pi_1^{\op{ab}}(U)(p')= \pi_1^{\op{ab}}(X)(p)= 1$. Since we also have a surjection $\pi_1^{\op{ab}}(U)(p')\to \pi_1^{\op{ab}}(X)(p')$, we see that $\pi_1^{\op{ab}}(X)(p')$ must be also trivial, which means that $\pi_1^{\op{ab}}(X)=1$. 

\subsection*{Log structures}

We gather here some definitions and facts that will be used in later sections:

We endow $X$ with the log structure associated to the strict normal crossings divisor $Z$ and denote this log scheme by $(X,M_Z)$ or $(X,Z)$: if we denote the open immersion $X\setminus Z\hookrightarrow X$ by $j$, the inclusion
\begin{equation}
  M_Z:= \mathcal O_X\cap j_*\mathcal O^* _{X-Z}\hookrightarrow \mathcal O_X
\end{equation}
defines a log structure on $X$.

\begin{definition} [Trivial log structure] \cite[Example 2.4.1]{ShI}. 
  
Let $Y$ be a scheme or a $p$-adic formal scheme. The pair ($\mathcal O_Y^*, \mathcal O_Y^*\hookrightarrow \mathcal O_Y$) is a log structure on $Y$, called the trivial log structure. This log structure is associated to the pre-log structure $\{1\}_Y\to \mathcal O_Y.$
\end{definition}

Since $Z$ is a normal crossings divisor, we know by \cite[Example 2.4.4]{ShI} that $M_Z$ is an fs, hence also fine, log structure \cite[Definition 2.1.5]{ShI} and the morphism
$(X,M_Z) \to (\operatorname{Spec}k, \text{triv.log str.})$ is log smooth.

\begin{definition}{\cite{ShII},\cite{HyKa}}
Assume that  we have a morphism of fine log schemes
\begin{equation*}
	(X,M)\to(\op{Spf}V,N)
\end{equation*}
with $X/\operatorname{Spf} V$ of finite type and $V$ a totally ramified finite extension of $W$.

A good embedding system is a diagram
\begin{equation}
\begin{tikzcd}
(X^\bullet, M^{\bullet}) \arrow{r}\arrow{d} & (P^{\bullet},N^\bullet)\arrow{d}\\
(X,M)\arrow{r} & (\op{Spf}V,N)
\end{tikzcd}
\end{equation}
with the following properties:

\begin{enumerate}
	\item $X^i \to X$ is a  hyper-covering for the \'etale topology  and $M^i (i\ge 0)$ is the inverse image of $M$ on $X^i$, for all $i$.
	\item  each $(X^i, M^i)$ is a simplicial fine log scheme, of
	 Zariski type and of finite type over $k$.
	\item each $(P^i,N^i)$ is a  simplicial fine formal log $V$ scheme, formally log smooth and of Zariski type.
	\item $(X^i,M^i)\to (P^i,N^i)$ is a closed immersion for all $i$.

\end{enumerate}
\end{definition}

\begin{remark}
  By  \cite[Propostion 2.2.11]{ShII}, given morphisms of fine log schemes
	$(X,M)\xrightarrow{f} (\operatorname{Spec} k)\xrightarrow{i}(\op{Spf}V,N)$
	where $f$ is of finite type, $i$ is the canonical exact closed immersion and assuming that $(\op{Spf}V,N)$ admits a chart,  there exists at least one good embedding system of $(X,M) $ over $(\op{Spf}V, N)$.
\end{remark}

\subsection*{Lattices}

\begin{definition}
  We call an isocrystal $\cl{L}$ on an open smooth scheme $U$ log extendable if there is a log isocrystal $\cl{L}^{\llog}$ on $X$ such that the restriction of $\cl{L}^{\llog}$ on $U$ is equal to $\cl{L}$.
\end{definition}

 \begin{definition} A lattice for an isocrystal $ \cl{L}$ on a scheme $U$ is a $p$-torsion free crystal $L\in \op{Crys}(U/W)$ such that $\cl{L}\simeq L\otimes \mathbb Q$.\end{definition}
 
 For a given isocrystal we can find more than one lattices and it is not in generally true that an isocrystal always admits a locally free lattice, i.e. a lattice $L$ whose value on $U$ is a locally free coherent sheaf.
 
 However, when the isocrystal is of rank 1, it admits a locally free lattice by \cite[Proposition 2.10]{ES1}.
 
 The same is indeed true for a rank 1 log isocrystal on a proper variety $X$:
 \begin{lemma}
 	A rank 1 log isocrystal on a proper variety $X$ admits a locally free lattice. 	
      \end{lemma}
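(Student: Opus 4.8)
The plan is to follow the proof of \cite[Proposition 2.10]{ES1}, the only genuinely new point being that the reflexive hull of a coherent lattice is compatible both with an integrable log connection and with the cosimplicial descent of a good embedding system. Since the construction is local on a log smooth lift, properness enters only through the existence of the embedding system recalled in the Preliminaries.

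First I would produce an arbitrary coherent lattice. As the log isocrystals on $(X,M)$ form the isogeny category of the coherent log crystals, the rank one log isocrystal $\cl L^{\llog}$ is represented by a coherent log crystal; dividing by its $p$-power torsion subcrystal I may assume the representative is $p$-torsion free, so I obtain a (a priori not locally free) lattice $L_0^{\llog}$ of $\cl L^{\llog}$. Choosing a good embedding system $(X^\bullet,M^\bullet)\hookrightarrow(P^\bullet,N^\bullet)$ as in the Preliminaries, the crystal $L_0^{\llog}$ is given by a compatible family of coherent $\cl O_{P^i}$-modules $M^i$ equipped with an integrable, quasi-nilpotent log connection $\nabla^i\colon M^i\to M^i\otimes\Omega^{\llog}_{P^i}$. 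Because the log structures $N^\bullet$ lift the simple normal crossings structure $M_Z$, each $P^i$ is a regular formal scheme; as $M^i$ is $p$-torsion free of generic rank one, its reflexive hull $(M^i)^{\vee\vee}$ is then an invertible $\cl O_{P^i}$-module.

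Next I would check that the connection and the descent data survive reflexivization. Since $\Omega^{\llog}_{P^i}$ is locally free, one has $(M^i)^{\vee\vee}\otimes\Omega^{\llog}_{P^i}=\bigcap_{\operatorname{ht}\mathfrak p=1}(M^i)_{\mathfrak p}\otimes\Omega^{\llog}_{P^i}$ inside $(M^i\otimes_W K)\otimes\Omega^{\llog}_{P^i}$, so it suffices to verify that $\nabla^i$ maps $(M^i)^{\vee\vee}$ into $(M^i)_{\mathfrak p}\otimes\Omega^{\llog}_{P^i}$ for every height one prime $\mathfrak p$. This holds at once, because $(M^i)^{\vee\vee}\subseteq(M^i)_{\mathfrak p}$ and $\nabla^i$, being defined on $M^i$, preserves its localization. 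Hence $\nabla^i$ restricts to an integrable, quasi-nilpotent log connection on $(M^i)^{\vee\vee}$. As the formation of the reflexive hull is canonical and commutes with \'etale localization and with the flat transition maps of $P^\bullet$, the family $\{(M^i)^{\vee\vee}\}$ again satisfies the crystal condition and defines a log crystal $L^{\llog}$.

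It then remains to identify $L^{\llog}$ as a locally free lattice of $\cl L^{\llog}$. Being of rank one, $\cl L^{\llog}$ is an invertible object of the category of log isocrystals, so its value $M^i\otimes_W K$ is an invertible, in particular reflexive, $\cl O_{P^i,K}$-module; since the reflexive hull commutes with the flat localization $\cl O_{P^i}\to\cl O_{P^i,K}$ this gives $(M^i)^{\vee\vee}\otimes_W K=(M^i\otimes_W K)^{\vee\vee}=M^i\otimes_W K$, whence $L^{\llog}\otimes\mathbb Q\cong\cl L^{\llog}$. As $(M^i)^{\vee\vee}$ is invertible on $P^i$, its restriction to $X$ is an invertible $\cl O_X$-module, so $L^{\llog}$ is the desired locally free lattice. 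The main obstacle I anticipate is exactly the bookkeeping of the third paragraph: confirming that the pointwise reflexive hull really respects the cosimplicial descent together with the log connection, so that one lands back in the category of log crystals and not merely in a collection of locally free sheaves on the individual $P^i$.
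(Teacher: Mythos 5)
Your local mechanism (reflexive hull of a torsion-free rank one module with log connection over a regular formal lift) is essentially the same as the local case of \cite[Proposition 2.10]{ES1} that the paper invokes, but your globalization has a concrete gap exactly at its centerpiece. In a good embedding system the log crystal is \emph{not} given by a family of coherent $\cl O_{P^i}$-modules: by the equivalence of \cite[Theorem 6.2]{Kat} (used in the paper in Theorem \ref{triviality}), it corresponds to modules with integrable quasi-nilpotent log connection on the log PD envelopes $D^i$ of the closed immersions $(X^i,M^i)\hookrightarrow (P^i,N^i)$. These immersions are in general not exact lifts of $X^i$ --- already at simplicial level one, $X^1$ sits inside a product of the $P^i$'s in positive codimension --- and the structure rings $\cl O_{D^i}$ carry divided powers and are not regular. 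So the assertion ``$p$-torsion free of generic rank one over a regular ring, hence the reflexive hull is invertible'' fails precisely on the spaces where your modules actually live, and with it the claimed local freeness of the value on $X$. The repair is to choose the level-zero pieces to be affine opens admitting exact smooth lifts together with lifts of the snc divisor, take hulls only there, and transport the descent data along the flat projections $D^{i_0\cdots i_n}\to P^{i_0}$ (flat base change for $\operatorname{Hom}$ of finitely presented modules); but once you do this you are performing a local construction followed by a gluing step, i.e.\ exactly the architecture of the paper's proof, and you would still owe an argument that the hulled connection remains quasi-nilpotent and that the simplicial data descends to a log crystal on $X$.

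This matters because your framing of where properness enters is also wrong, and it hides the one genuinely new ingredient of the paper's proof. Good embedding systems exist for any fine log scheme of finite type (see the Remark in the Preliminaries citing \cite[Proposition 2.2.11]{ShII}); their existence does not use properness. The paper instead glues the local locally free lattices as in \cite[Lemma 2.11]{ES1}, and the point of the lemma under review is that this gluing argument transfers to the log setting because $H^0(X,\cl O_X)^{\llog}_{\op{crys}} = H^0(X,\cl O_X)_{\op{crys}}$ by \cite[Section 6]{ABV} --- this comparison, valid for $X$ proper, is where the hypothesis is consumed. Your canonical-hull idea is attractive (since the hull of a fixed global lattice is canonical and commutes with restriction, the local identifications would be automatic rather than needing the $p$-power rescaling of \cite[Lemma 2.11]{ES1}), and if completed it might even weaken the hypotheses; but as written the proposal neither carries out the descent nor supplies any substitute for the $H^0$ comparison on which the paper's proof rests.
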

      
 \begin{proof}
 	The argument is the same as in  \cite[Proposition 2.10]{ES1} in the local case. We can glue the local lattices as in  \cite[Lemma 2.11]{ES1}, because $H^0(X,\cl{O}_X)^{\llog}_{\op{crys}} = H^0(X,\cl{O}_X)_{\op{crys}}$ by \cite[Section 6]{ABV}.
      \end{proof}

\section{Residue exact sequence}\label{residuesection}

We follow \cite[Section 6]{ABV}.
Let $\{U_i\hookrightarrow V_i\}$ be a covering in the log crystalline site. We set $U_{ij}=U_i\cap U_j$ and denote by $V_{ij}$ the log PD envelope of $U_{ij}$ in $V_i$ (or $V_j$) and do the same for every tuple of indices $(i_0,i_1,\cdots,i_n)$. 
Define $Q^r_{i_0,\dots,i_n}:=\omega^{r,log}_{V_{i_0,\dots,i_n}}/ \omega^{r}_{V_{i_0,\dots,i_n}}$. We then have the following maps between the total complexes of the corresponding \v{C}ech complexes, with exact columns:

\begin{equation}
\begin{tikzcd}
0\arrow{r}& \oplus \mathcal O^*_{V_i}\arrow{d}\arrow{r} & \oplus_{i,j} \mathcal O^*_{V_{ij}} \oplus_i \omega^1_{V_{i}} \arrow{d}\arrow{r} & \oplus_{i,j,k} \mathcal O^* _{V_{ijk}} \oplus_{i,j} \omega^1_{V_{ij}} \oplus_{i,j} \omega^2_{V_{ij}}\arrow{r}\arrow{d} & \dots\\
0\arrow{r}& \oplus \mathcal O^*_{V_i}\arrow{d}\arrow{r} & \oplus_{i,j} \mathcal O^*_{V_{ij}} \oplus_i \omega^{1,\llog}_{V_{i}} \arrow{d}\arrow{r} & \oplus_{i,j,k} \mathcal O^* _{V_{ijk}} \oplus_{i,j} \omega^{1,\llog}_{V_{ij}} \oplus_{i,j} \omega^{2,\llog}_{V_{ij}}\arrow{r}\arrow{d} & \dots\\
&0\arrow{r} & \oplus Q_i^1\arrow{d} \arrow{r} & \oplus_{i,j} Q^1_{ij} \oplus_{i} Q_i^2\arrow{r} \arrow{d}& \dots\\
&&0&0 
\end{tikzcd}
\end{equation}
and note  that the third complex is the total complex of:
\begin{equation}\label{qcomp}
\begin{tikzcd}
0\arrow{r}\arrow{d}  & \oplus Q_i^1 \arrow{r}\arrow{d} & \oplus Q_{ij}^1 \arrow{r}\arrow{d}&\dots\\
0\arrow{r}\arrow{d}  & \oplus Q_i^2 \arrow{r}\arrow{d} & \oplus Q_{ij}^2 \arrow{r}\arrow{d}&\dots\\
0\arrow{r}\arrow{d}  & \oplus Q_i^3\arrow{r}\arrow{d} & \oplus Q_{ij}^3 \arrow{r}\arrow{d}&\dots\\
0& \dots & \dots &\dots
\end{tikzcd}
\end{equation}

Taking cohomology, we obtain maps (starting from $H^1$)
\begin{equation}
H^1_{\op{crys}}(X/W,\mathcal O^*_{X/W})\to H^1((X,M)/W,\mathcal O^*_{X/W})_{\op{crys}}^{\llog} \to H^1(Tot(Q_{\bullet,\bullet})) \to H^2(X/W,\mathcal O^*_{X/W})
\end{equation}
and computing the cohomology of the complex (\ref{qcomp}) with spectral sequences, using the fact that $H^1(Tot(Q_{\bullet,\bullet})) = hor E\infty _{10} \oplus hor E^\infty _{01}$, we obtain the exact sequence
\begin{multline}
H^1_{\op{crys}}(X/W,\mathcal O^*_{X/W})\to H^1((X,M)/W,\mathcal O^*_{X/W})_{\op{crys}}^{\llog} \to  \\
\to \op{Ker}(H^0(Q^1_\bullet) \to H^0(Q^2_\bullet) ) \to H^2(X/W,\mathcal O^*_{X/W}) .
\end{multline}
By the computation in \cite[Proposition 6.4]{ABV}  we have that
\begin{equation} \op{Ker}(H^0(Q^1_\bullet) \to H^0(Q^2_\bullet) )= \op{Div}_Z(X) \otimes W(k),\text{ or } \oplus W(k)[Z_i], \end{equation}
so we obtain:
\begin{equation}\label{redidueSeq}
H^1_{\op{crys}}(X/W,\mathcal O^*_{X/W})\to H^1((X,M)/W,\mathcal O^*_{X/W})_{\op{crys}}^{\llog} \to \oplus W(k)[Z_i] 
\to  H^2(X/W,\mathcal O^*_{X/W}) .
\end{equation}
Moreover, in \cite{ABV} we find the exact sequence
\begin{equation}\label{abvexact}
0\to H^1_{\op{crys}}(X/W)\to H^1((X,M)/W)_{\op{crys}}^{\llog} \to \oplus W(k)[Z_i] \to H_{\op{crys}}^2(X/W)
\end{equation}
with the last map being the crystalline Chern class map. By \cite[Proposition 2.9(2)]{ES1} and  \cite[Theorem 4.3.1]{Esfund} we have that $\pi^{\op{ab}}_1(X)=1$ implies $H^1_{\op{crys}}(X/W)=0$. Hence, the group $H^1(X/W)_{\op{crys}}^{\llog}$ coincides in this case with the kernel of the crystalline Chern class map, which by \cite[Proposition 6.4]{ABV} is equal to  $\op{Div}_Z^0(X)\otimes W(k)$ and this is equal to zero because $\operatorname{Pic^0}(X)=0$.
We therefore obtain an injection $ \oplus W(k)[Z_i] \hookrightarrow H_{\op{crys}}^2(X/W)$. 

For $X$  proper we have that $H^1_{\op{crys}}(X/W,\mathcal O^*_X)=0$, by \cite[Theorem 0.1.1]{ES1}  and  \cite[Theorem 4.3.1]{Esfund}. Therefore (\ref{redidueSeq}) gives us an injection
\begin{equation}
H^1((X,M)/W,\mathcal O^*)_{\op{crys}}^{\llog}\hookrightarrow  \oplus W(k)[Z_i]
\end{equation}
and combining the two exact sequences we obtain:
\begin{equation}\label{inj1}
H^1((X,M)/W,\mathcal O^*)_{\op{crys}}^{\llog}\hookrightarrow  \oplus W(k)[Z_i] \hookrightarrow   H_{\op{crys}}^2(X/W).
\end{equation}

From the exact sequences above and going through the definition of $c_1^{\op{crys}}$ and $c_1^{\op{logcrys}}$ (see also Lemma \ref{chernlemma} for the definition) we observe, as already remarked in (Ibid.), that the following diagram commutes:
\begin{equation}
  \begin{tikzcd}
    H^1((X,M)/W,\mathcal O^*)_{\op{crys}}^{\llog}\arrow[hook]{r}\arrow{dr} & \oplus W(k)[Z_i]\arrow[hook]{r}{c_1^{\op{crys}}} & H^2(X/W)_{\op{crys}}\\
   & H^1(X,\mathcal O^*_X)\arrow{ur}{c_1^{\op{crys}}}&
  \end{tikzcd}\hfill .
\end{equation}
From this we conclude that under our assumptions the projection map
\begin{equation}\label{inj2} H^1((X,M)/W,\mathcal O^*)_{\op{crys}}^{\llog}\to  H^1(X,\mathcal O^*_X), \qquad (L,\nabla)\mapsto L_X\end{equation}
is in fact injective.

\begin{remark}\label{observation}
	We can show  that $H^1_{rig}(U/K)=0$ under our assumptions. For a proper scheme this is known.
	Observing (\ref{abvexact}) we conclude as before that 
	$H^1(X/W)_{\op{crys}}^{\llog}=0$. Hence $H^1_{rig}(U/K)\simeq H^1(X/W)_{\op{crys}}^{\llog}\otimes \mathbb Q=0$.
      \end{remark}

\section{The value of the log extension on $X$}
Starting with $\cl{L}\in \op{I}_{\op{crys}}(U/K)$, we take an extension $\cl{L}^{\llog}\in \op{I}_{\op{logcrys}}(X/K)$ and, as discussed above, a locally free log lattice $L$ of $\cl{L}^{\llog}$. We denote its restriction to $U$ by $L$.
In the following we denote by $L^{\llog}_X \in \op{Coh}(\cl{O}_X)$ the value of the log crystal on $X$.

\begin{lemma}\label{chernlemma}
With the above notations we have $c_1^{\op{logcrys}}(L^{\llog}_X)=0$.
\end{lemma}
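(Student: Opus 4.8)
The plan is to show that the log crystalline first Chern class of $L^{\llog}_X$ vanishes by tracing it through the residue exact sequence (\ref{redidueSeq}) and the injections established in Section \ref{residuesection}. The log crystal $(L^{\llog}_X,\nabla^{\llog}_X)$ defines, via its isomorphism class as a rank one object, a class $[\,L^{\llog}\,] \in H^1((X,M)/W,\mathcal O^*_{X/W})_{\op{crys}}^{\llog}$. By definition, the log crystalline Chern class $c_1^{\op{logcrys}}(L^{\llog}_X)$ is the image of this class under the composite $H^1((X,M)/W,\mathcal O^*_{X/W})_{\op{crys}}^{\llog} \to \oplus W(k)[Z_i] \xrightarrow{c_1^{\op{crys}}} H^2(X/W)_{\op{crys}}$ appearing in (\ref{inj1}). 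So the statement $c_1^{\op{logcrys}}(L^{\llog}_X)=0$ is really a statement about where $[\,L^{\llog}\,]$ lands in the target of the residue map and how that interacts with the boundary map to $H^2_{\op{crys}}$.

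First I would make the definition of $c_1^{\op{logcrys}}$ precise (this is the content flagged by the parenthetical ``see also Lemma \ref{chernlemma} for the definition''), identifying it with the boundary/Chern-class map in the long exact sequence associated to the exponential-type or Kummer-type sequence on the log crystalline site, exactly as set up in \cite[Section 6]{ABV}. Next I would invoke the commutative diagram displayed just before (\ref{inj2}): the log class maps on the one hand to $\oplus W(k)[Z_i]$ (its residues along the components $Z_i$) and on the other hand, via the projection (\ref{inj2}), to the class $L_X \in H^1(X,\mathcal O^*_X)$ of the underlying line bundle, and the two crystalline Chern classes agree in $H^2(X/W)_{\op{crys}}$. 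Because $\mathcal L$ has rank one and extends with an \emph{integrable} log connection, the crucial input is that the underlying line bundle class carries no crystalline Chern class: the log connection forces the class to lie in the kernel of $c_1^{\op{crys}}$, which by the discussion following (\ref{abvexact}) is $\op{Div}^0_Z(X)\otimes W(k)$, and this kernel was already shown to vanish because $\op{Pic}^0(X)=0$ under the assumption $\pi_1^{\op{ab}}(X)=1$.

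The \textbf{main obstacle} I anticipate is the bookkeeping needed to verify that the existence of the flat (integrable, quasi-nilpotent) log connection $\nabla^{\llog}_X$ genuinely places $[\,L^{\llog}\,]$ inside the image of $H^1((X,M)/W,\mathcal O^*_{X/W})_{\op{crys}}^{\llog}$ — i.e. that having a log \emph{crystal} structure, not merely a log line bundle, is what kills the Chern class. Concretely, one must check that the connection identifies the residues $(a_i)_i \in \oplus W(k)[Z_i]$ as the genuine residues of the log crystal, and that the compatibility in the commutative triangle forces $c_1^{\op{crys}}$ of the image to vanish. I would argue this by combining the injectivity of the composite in (\ref{inj1}) with the vanishing $H^1_{\op{crys}}(X/W,\mathcal O^*_X)=0$ for proper $X$, so that the full class is detected by its residues and the integrability constrains these residues to be compatible with a flat structure, whence $c_1^{\op{logcrys}}(L^{\llog}_X)$, being the image under the Chern map that factors through $c_1^{\op{crys}}$ on the underlying bundle, must be zero.

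Finally, I would close the argument by remarking that since $c_1^{\op{logcrys}}(L^{\llog}_X)$ equals the image of the class of the log line bundle under the map factoring through the vanishing group $\op{Div}^0_Z(X)\otimes W(k)=0$, the class must itself be zero, which is exactly the assertion of Lemma \ref{chernlemma}. The essential conceptual point throughout is that the rank one log extendability, together with $\op{Pic}^0(X)=0$ and $H^1_{\op{crys}}(X/W)=0$, removes all the room in which a nonzero Chern class could live.
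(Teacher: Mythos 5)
Your proposal misidentifies the class whose vanishing is asserted, and the route through the residue sequence cannot work. You define $c_1^{\op{logcrys}}(L^{\llog}_X)$ as the image of $[L^{\llog}]\in H^1((X,M)/W,\cl O^*)^{\llog}_{\op{crys}}$ under the composite in (\ref{inj1}), $H^1((X,M)/W,\cl O^*)^{\llog}_{\op{crys}}\to \oplus W(k)[Z_i]\xrightarrow{c_1^{\op{crys}}} H^2(X/W)_{\op{crys}}$. But by the commutative triangle preceding (\ref{inj2}), that composite computes $c_1^{\op{crys}}(L^{\llog}_X)$ --- the \emph{non-log} crystalline Chern class --- and under the standing hypotheses \emph{both} arrows in (\ref{inj1}) are injective, so the composite is injective on $H^1((X,M)/W,\cl O^*)^{\llog}_{\op{crys}}$; indeed this injectivity is exactly how the paper derives (\ref{inj2}). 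Consequently, if your argument succeeded it would prove $[L^{\llog}]=0$, i.e.\ that the rank one log crystal is already trivial --- essentially the conclusion of Theorem \ref{triviality}, which the paper reaches only after the Frobenius/deformation argument, and certainly not available at this stage. The intermediate claims are also unjustified or false: an integrable log connection does \emph{not} force the underlying bundle into the kernel of $c_1^{\op{crys}}$ (the bundle $\cl O_X(Z_i)$ carries the integrable log connection $d\llog$ yet has nonzero residue and nonzero $c_1^{\op{crys}}$), and the vanishing group $\op{Div}^0_Z(X)\otimes W(k)=0$ plays no such role. The correct relation between the two classes is $c_1^{\op{logcrys}}=f^*\circ c_1^{\op{crys}}$ (diagram (\ref{cryslog})), where $f^*:H^2(X/W)_{\op{crys}}\to H^2(X/W)^{\llog}_{\op{crys}}$ is far from injective --- rationally its kernel contains $\left<K[Z_i]\right>_{\mathbb Q}$ --- so vanishing of $c_1^{\op{logcrys}}$ neither follows from nor implies anything about $c_1^{\op{crys}}$ on $\oplus W(k)[Z_i]$.

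The missing idea is the one the paper actually uses, which bypasses the residue sequence entirely: work with the sequences (\ref{firstlogchern}) and (\ref{secondlogchern}) on the log crystalline site. From Remark \ref{observation} one has $H^1(X/W,\cl O^{\llog}_{X/W})^{\llog}_{\op{crys}}=0$, and since $H^0(X/W,\cl O^{\llog}_{X/W})^{\llog}_{\op{crys}}\simeq W$ surjects onto $H^0(X,\cl O_X)\simeq k$, the long exact sequence of (\ref{firstlogchern}) gives $H^1(X/W,\cl J^{\llog}_{X/W})^{\llog}_{\op{crys}}\simeq H^1(X/W,1+\cl J^{\llog}_{X/W})^{\llog}_{\op{crys}}=0$. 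The long exact sequence of (\ref{secondlogchern}) then yields exactness of
\[
0\to H^1(X/W,\cl O^{*,\llog}_{X/W})^{\llog}_{\op{crys}}\to H^1(X,\cl O^*_X)\xrightarrow{c_1^{\op{logcrys}}} H^2(X/W,\cl O^{*,\llog}_{X/W})^{\llog}_{\op{crys}},
\]
and since $L^{\llog}_X$ is by definition the value on $X$ of a rank one log crystal, its class in $H^1(X,\cl O^*_X)$ lies in the image of the first map, hence in the kernel of $c_1^{\op{logcrys}}$. In short, the lemma is the log analogue of the standard fact that a line bundle lifting to the crystalline site has trivial crystalline Chern class; the residue sequence enters the paper only afterwards, in (\ref{inj2}) and Lemma \ref{formalsum}, to exploit the vanishing you were asked to prove.
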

\begin{proof}

  One defines the crystalline Chern class of a locally free sheaf in analogy with \cite{chern}: let $\cl{J}^{\llog}_{X/W}$ be the  log PD ideal defined by the short exact sequence
  \begin{equation}\label{firstlogchern}
    0\to  \cl{J}^{\llog}_{X/W} \to \cl{O}_{X/W}^{\llog} \to i_*\cl{O}_{X} \to 0
  \end{equation}
  with $i:(X)_{\op{Zar}} \to (X/W)^{\llog}_{\op{crys}}$. Consider also
  \begin{equation}\label{secondlogchern}
    1\to 1+ \cl{J}^{\llog}_{X/W} \to \cl{O}_{X/W}^{*,\llog} \to i_*\cl{O}^*_{X} \to 1.
    \end{equation}

    The first log crystalline Chern class is defined by composing the map $H^1(X,\cl{O}^*_X) \to H^2(X/W, \cl{J}^{\llog}_{X/W} )^{\llog}_{\op{crys}}$ followed by the logarithm map
    \begin{equation}
    1 + \cl{J}^{\llog}_{X/W} \to \cl{J}^{\llog}_{X/W},\text{ } 1+x\mapsto \op{log}(1+x).  \end{equation} So we obtain $\op{c}_1^{\op{logcrys}} : H^1(X,\cl{O}^*_X) \to H^2(X/W, \cl{O}^{*,\op{log}}_{X/W})^{\llog}_{\op{crys}}.$

    Taking cohomology of \ref{firstlogchern} we get
      \begin{multline}
      0\to H^0(X/W, \cl{J}^{\op{log}}_{X/W})^{\op{log}}_{\op{crys}} \to
      H^0(X/W, \cl{O}^{\op{log}}_{X/W})^{\llog}_{crys}\to\\
      \to H^0(X,\cl{O}_X)\to H^1(X/W, \cl{J}^{\op{log}}_{X/W})^{\llog}_{\op{crys}}
      \to
      H^1(X/W, \cl{O}^{\op{log}}_{X/W})^{\llog}_{\op{crys}}.
    \end{multline}
 By Remark \ref{observation} we have however that $ H^1(X/W, \cl{O}^{\op{log}}_{X/W})^{\llog}_{\op{crys}}=0$.

    Moreover, since $X$ is geometrically connected we have that $ H^0(X/W, \cl{O}^{\op{log}}_{X/W})^{\llog}_{\op{crys}}\simeq W$ which surjects to $ H^0(X,\cl{O}_X)\simeq k$. Hence we see that $H^1(X/W, \cl{J}^{\op{log}}_{X/W})^{\llog}_{\op{crys}}\simeq H^1(X/W,1+ \cl{J}^{\op{log}}_{X/W})^{\llog}_{\op{crys}} =0$.

    Therefore we obtain an exact sequence
      \begin{equation}
      0\to  H^1(X/W, \cl{O}^{*,\llog}_{X/W})^{\llog}_{\op{crys}} \to H^1(X,\cl{O}_X)\to H^2(X/W, \cl{J}^{\op{log}}_{X/W})^{\llog}_{\op{crys}}.
    \end{equation}
    By applying the map
    \begin{equation}
      H^2(X/W, \cl{J}^{\op{log}}_{X/W})^{\llog}_{\op{crys}}\to H^2(X/W, 1+ \cl{J}^{\op{log}}_{X/W})^{\llog}_{\op{crys}}\to  H^2(X/W, \cl{O}^{*,\op{log}}_{X/W})^{\llog}_{\op{crys}}
    \end{equation}
    we get
    \begin{equation}
 0\to  H^1(X/W, \cl{O}^{*\llog}_{X/W})^{\llog}_{\op{crys}} \to H^1(X,\cl{O}_X)\xrightarrow{c_1^{\llogcr}} H^2(X/W, \cl{O}^{*,\op{log}}_{X/W})^{\llog}_{\op{crys}}.
      \end{equation}
   Therefore we have that the value of a rank 1 crystal on $X$ is sent to zero by $c_1^{\op{logcrys}}$.
\end{proof}

\begin{lemma}\label{formalsum}
  The line bundle $(L_X^{\llog})^{\otimes N}$ for some natural number $N$ can be written as $\mathcal O_X(\sum a_i Z_i)$, with $a_i\in\mathbb Q$.
\end{lemma}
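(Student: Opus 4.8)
The plan is to prove that the class of $L^{\llog}_X$ in $\operatorname{Pic}(X)=H^1(X,\mathcal O_X^*)$ becomes a $\mathbb Q$-linear combination of the boundary classes $[Z_i]$ after tensoring with $\mathbb Q$; clearing denominators then produces the power $N$ and the isomorphism with $\mathcal O_X(\sum_i a_iZ_i)$. The mechanism transporting the vanishing of Lemma \ref{chernlemma} to the divisor classes is the ordinary crystalline Chern class together with the residue exact sequence of Section \ref{residuesection}.

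First I would reinterpret Lemma \ref{chernlemma}. Its exact sequence identifies the kernel of $c_1^{\llogcr}$ inside $\operatorname{Pic}(X)$ with $H^1((X,M)/W,\mathcal O^*)^{\llog}_{\op{crys}}$ embedded by the projection (\ref{inj2}); since $c_1^{\llogcr}(L^{\llog}_X)=0$, the class $[L^{\llog}_X]$ is the image of a unique $\xi\in H^1((X,M)/W,\mathcal O^*)^{\llog}_{\op{crys}}$. Let $(r_i)\in\oplus W(k)[Z_i]$ be the residues of $\xi$ under the injection (\ref{inj1}). The commutative triangle at the close of Section \ref{residuesection}, whose two legs compute $c_1^{\op{crys}}$ of $[L^{\llog}_X]$ through $\operatorname{Pic}(X)$ and through $\oplus W(k)[Z_i]$ respectively, then yields
\begin{equation}
c_1^{\op{crys}}(L^{\llog}_X)=\sum_i r_i\,c_1^{\op{crys}}(\mathcal O_X(Z_i))\qquad\text{in } H^2_{\op{crys}}(X/W),
\end{equation}
so $c_1^{\op{crys}}(L^{\llog}_X)$ is a $W(k)$-linear combination of the crystalline cycle classes of the $Z_i$.

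Next I would bring in the group-theoretic hypotheses. Since $\pi_1^{\op{ab}}(X)=1$ gives $\operatorname{NS}(X)=\operatorname{Pic}(X)$, the group $\operatorname{Pic}(X)$ is finitely generated and the class $[L^{\llog}_X]$ already lies in $\operatorname{NS}(X)$, on which the crystalline cycle class is injective after $\otimes\mathbb Q$; in fact $\operatorname{NS}(X)\otimes_{\mathbb Z}K\hookrightarrow H^2_{\op{crys}}(X/W)\otimes\mathbb Q$ is injective, by non-degeneracy of the numerical intersection pairing on $\operatorname{NS}(X)\otimes\mathbb Q$ and its compatibility with cup product and Poincar\'e duality in crystalline cohomology. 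The displayed identity thus writes the $\mathbb Q$-rational class $c_1^{\op{crys}}(L^{\llog}_X)$ as a $K$-linear combination of the $\mathbb Q$-rational classes $c_1^{\op{crys}}(\mathcal O_X(Z_i))$. Because a $\mathbb Q$-basis of $\operatorname{NS}(X)\otimes\mathbb Q$ stays $K$-linearly independent in $H^2_{\op{crys}}(X/W)\otimes\mathbb Q$, the $\mathbb Q$-defined linear system expressing this relation is solvable over $K$, hence over $\mathbb Q$: there are $a_i\in\mathbb Q$ with $c_1^{\op{crys}}(L^{\llog}_X)=\sum_i a_i\,c_1^{\op{crys}}(\mathcal O_X(Z_i))$. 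Injectivity then gives $[L^{\llog}_X]=\sum_i a_i[Z_i]$ in $\operatorname{Pic}(X)\otimes\mathbb Q$; choosing $N$ to clear denominators and setting $m_i=Na_i\in\mathbb Z$ produces $(L^{\llog}_X)^{\otimes N}\cong\mathcal O_X(\sum_i m_iZ_i)$, which is the claimed formal sum with coefficients $a_i=m_i/N\in\mathbb Q$.

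The remaining hypothesis $H^0(U,\mathcal O_U^*)=k^*$ enters to make this expression unambiguous: it follows from $\pi_1^{\op{ab},(l)}(U)=1$ for $l\ne p$ through the Kummer sequence, which forces $\Gamma(U,\mathcal O^*)$ to be $l$-divisible and hence equal to the divisible group $k^*$, and it implies that the map $\oplus_i\mathbb Z[Z_i]\to\operatorname{Pic}(X)$, $[Z_i]\mapsto\mathcal O_X(Z_i)$, is injective (a principal divisor supported on $Z$ is the divisor of a global unit, hence constant); consequently $\operatorname{Pic}(U)$ is finitely generated and the $a_i$ are determined. This is the prime-to-$p$ counterpart of the $p$-adic crystalline computation. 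I expect the main obstacle to be exactly the descent in the previous paragraph: one must justify that the relation among crystalline cycle classes, a priori only valid with $W(k)$-coefficients, can be realized with rational coefficients. This rests on the $K$-linear (not merely $\mathbb Q$-linear) injectivity of the cycle class map on $\operatorname{NS}(X)$ — equivalently on the non-degeneracy of the numerical pairing — since $\mathbb Q$-injectivity alone does not preclude $K$-linear relations among $\mathbb Q$-rational classes.
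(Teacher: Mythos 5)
Your proposal is correct in substance, but it takes a genuinely different route from the paper. The paper never forms the residue vector of the log crystal; it passes through rigid cohomology instead. Concretely, it first upgrades the Picard sequence to $0\to\oplus\mathbb Z[Z_i]\to \operatorname{NS}(X)\to\operatorname{NS}(U)\to 0$ (via Kindler, using $H^0(U,\mathcal O_U^*)=k^*$ and $\operatorname{Pic}=\operatorname{NS}$ — so for the paper these facts are load-bearing, not merely a matter of unambiguity as in your last paragraph), then identifies $c_1^{\llogcr}$ with $j^*\circ c_1^{rig}$ through two commutative diagrams (the exact comparison functor $f^*$ between the crystalline and log crystalline topoi, and Petrequin's compatibility of $c_1^{rig}$ with $c_1^{\op{crys}}$), invokes Illusie's injection $H^1(X,\mathcal O^*_X)\otimes K\hookrightarrow H^2_{rig}(X/K)$, and concludes from Lemma \ref{chernlemma} that $c_1^{rig}(L^{\llog}_X)$ lies in $\operatorname{Ker}\bigl(H^2_{rig}(X/K)\to H^2_{rig}(U/K)\bigr)=\left<K[Z_i]\right>$, whence $[L^{\llog}_X]\in\left<K[Z_i]\right>\cap\operatorname{NS}(X)\otimes\mathbb Q=\oplus\mathbb Q[Z_i]$. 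You instead stay inside crystalline cohomology: the class $\xi$ in $H^1((X,M)/W,\mathcal O^*)^{\llog}_{\op{crys}}$, the residue injection (\ref{inj1}), and the commutative triangle of Section \ref{residuesection} give the $W(k)$-linear relation $c_1^{\op{crys}}(L^{\llog}_X)=\sum_i r_i\,c_1^{\op{crys}}(\mathcal O_X(Z_i))$, after which you descend coefficients from $K$ to $\mathbb Q$. Note that the two arguments share their final step: your ``solvable over $K$, hence over $\mathbb Q$'' is exactly the paper's $\left<K[Z_i]\right>\cap\operatorname{NS}(X)\otimes\mathbb Q=\oplus\mathbb Q[Z_i]$, and both hinge on the injectivity of the cycle class on $\operatorname{NS}(X)\otimes K$, which you justify directly by the numerical pairing where the paper cites \cite[Remark II 6.8.4]{ill}. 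What your route buys: it avoids rigid cohomology, Petrequin's comparison and the excision computation of the Gysin kernel entirely, and it yields the $W(k)$-integral residue relation at the outset — information the paper only extracts after the lemma, in the remark showing $E_X\in\oplus\mathbb Z_{(p)}[Z_i]$. What the paper's route buys: the kernel $\operatorname{Ker}(H^2_{rig}(X/K)\to H^2_{rig}(U/K))=\left<K[Z_i]\right>$ is an off-the-shelf fact, whereas your argument leans on the commutativity of the triangle, which the paper asserts with only a light justification.

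Two small repairs. First, you extract $\xi$ by identifying $\operatorname{Ker}(c_1^{\llogcr})$ with $H^1((X,M)/W,\mathcal O^*)^{\llog}_{\op{crys}}$; this exactness is delicate for the version of $c_1^{\llogcr}$ valued in $H^2(X/W)^{\llog}_{\op{crys}}$ (additive coefficients), which is the one used downstream — the safe statement is only for the boundary map into $H^2$ of $1+\cl{J}^{\llog}_{X/W}$. But you do not need it: $\xi$ exists tautologically, since $L^{\llog}$ is itself an invertible log crystal, i.e.\ a class in $H^1((X,M)/W,\mathcal O^*_{X/W})^{\llog}_{\op{crys}}$ whose image under (\ref{inj2}) is $L^{\llog}_X$; this is also what the proof of Lemma \ref{chernlemma} implicitly uses. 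Second, clearing denominators from an identity in $\operatorname{Pic}(X)\otimes\mathbb Q$ only yields $(L^{\llog}_X)^{\otimes N}\cong\mathcal O_X\bigl(\sum_i m_iZ_i\bigr)\otimes T$ with $T$ torsion; since $\operatorname{NS}(X)$ is finitely generated, enlarge $N$ by the exponent of its torsion subgroup, as the paper does explicitly in its final sentence.
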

\begin{proof}
We have an exact sequence:
\begin{equation}
0\to H^0(X,\mathcal O_X^*)\to H^0(U,\mathcal O_U^*)\to\oplus\mathbb Z[Z_i]\to \operatorname{Pic}(X)\to \operatorname{Pic}(U)\to 0
\end{equation}
By \cite[Proposition 3.5]{evidence} we have that  $\pi_1^{\op{ab}}(X)=\pi_1^{\op{ab}}(U)=1$ implies $H^0(U,\mathcal O^*_U)=k^*$ and thus $H^0(X, \mathcal O^*_{X})$ is isomorphic  to $H^0(U, \mathcal O^*_U)$.

Moreover, it also implies that $\operatorname{Pic}(X)=\operatorname{NS}(X)$ and $\operatorname{Pic}(U)=\operatorname{NS}(U)$, and the exact sequence becomes
\begin{equation}\label{Pic}
0\to\oplus\mathbb Z[Z_i]\to \operatorname{NS}(X)\to \operatorname{NS}(U)\to 0.
\end{equation}
The log crystalline Chern class can be seen as \begin{equation}c_1^{\op{logcrys}}:H^1(X,\mathcal O_X^*)\to H^2(X/W)^{\llog}_{\op{crys}} \otimes \mathbb Q\simeq H^2_{rig}(U/K). \end{equation}

We observe that the following diagram commutes:
 \begin{equation}\label{cryslog}
  \begin{tikzcd}
   H^1(X,\mathcal O_X^*)\arrow{r}{c_1^{\op{logcrys}}}\arrow{d}{c_1^{\op{crys}}} & H^2(X/W)^{\llog}_{\op{crys}}\\
    H^2(X/W)_{\op{crys}}\arrow{ru}{f^*}
  \end{tikzcd}\hfill ,
\end{equation}
where we denote by $f^*: H^2(X/W)_{\op{crys}}\to H^2(X/W)^{\llog}_{\op{crys}}$ the homomorphism  which is obtained by the exact map of topoi $f:(X/W_n)^{\llog}_{\op{crys}}\to (X/W_n)_{\op{crys}}$ for all $n$
and  is defined as  $f^*(E)((U,M_U),(T,M_T)):=E(U\subset T)$.
As stated in \cite{ABV} this map is exact and commutes with global sections. 
Applying this to the sequence
\begin{equation}
  1\to 1+\mathcal J_X\to \mathcal O^*_{X/W_n}\to \mathcal O^*_X\to 1
\end{equation}
we get
\begin{equation}
  \begin{tikzcd}
    1\arrow{r} & f^*( 1+\mathcal J_X)\arrow{r} \arrow{d} & f^*( \mathcal O^*_{X/W_n})\arrow{r}\arrow{d} & f^*( \mathcal O^*_X)\arrow{r}\arrow{d} &  1\\
  1\arrow{r} &  1+\mathcal J_X^{\llog} \arrow{r}&  \mathcal O^{*,\llog}_{X/W_n}\arrow{r}&  \mathcal O^*_X \arrow{r} &  1
\end{tikzcd}
\end{equation}
with exact rows and commutative squares. By taking cohomology of the first row and composing with the log map, one obtains $f^*(c_1^{\op{crys}})$ whereas by taking cohomology of the second, one obtains $c_1^{\op{logcrys}}$.

Combining the previous observation with \cite[Proposition 3.6.1]{Petr} we obtain the following commutative diagrams
\begin{equation}
\begin{tikzcd}
  H^2_{rig}(X/K)\arrow{r} & H^2(X/W)_{\op{crys}}\otimes \mathbb Q \arrow{d}\\
  H^1(X,\mathcal O^*)\arrow{u}{c_1^{rig}} \arrow{ur}{c_1^{\op{crys}}}\arrow{r}{c_1^{\op{logcrys}}} & H^2(X/W)^{\llog}_{\op{crys}}\otimes \mathbb Q
 \end{tikzcd}\hfill .
\end{equation}

Moreover, by \cite[Remark II 6.8.4]{ill}, and using the fact that  $\pi_1^{\op{ab}}(X)=1$ implies also that $\operatorname{Pic}(X)=\operatorname{NS}(X)$,  we know that $H^1(X,\mathcal O^*_X)\otimes K \hookrightarrow H^2_{rig}(X/K)$ via the first rigid Chern class.

The line bundle $L^{\llog}_X$ seen in $\operatorname{NS}(X)\otimes\mathbb Q$ (i.e. $L^{\llog}_X$ modulo torsion) is sent to zero by $c_1^{\op{logcrys}}$, by Lemma \ref{chernlemma}.

So we have
\begin{equation*}
  \begin{tikzcd}
    &\left<K[Z_i]\right>_{\mathbb Q}\arrow[hookrightarrow]{d}\\
   \operatorname{NS}(X)\otimes \mathbb Q\hookrightarrow \operatorname{NS}(X)\otimes K \arrow[hookrightarrow]{r}{c_1^{rig}} \arrow{dr}{j^*c_1^{rig}}& H^2_{rig}(X/K) \arrow{d}{j}\\
    & H^2_{rig}(U/K)
 \end{tikzcd}
 \end{equation*}
which means that $L^{\llog}_X$ is contained in the kernel of the map $ H^2_{rig}(X/K)\to H^2_{rig}(U/K)$, which is $\left<K[Z_i]\right>_{\mathbb Q}$. Hence, $L^{\llog}_X\in \left<K[Z_i]\right>_{\mathbb Q} \cap \operatorname{NS}(X)\otimes \mathbb Q=\oplus \mathbb Q[Z_i]$.

This in turn means that $L^{\llog}_X$ can be written as $\mathcal O_X(\sum a_i Z_i) \otimes T$, with $T$ a torsion bundle, 
and therefore some power of it can be written as a formal sum of $Z_i$ with $\mathbb Q$-coefficients.
\end{proof}

Of course we could increase the value of $N$ and obtain integral coefficients in the above decomposition. As we see in the next section it is actually enough to raise the bundle $(L^{\llog}_X)^{\otimes N}$ to a power that is prime to $p$ and obtain the same result.

To simplify the notation set $E^{\llog}:=(L^{\llog})^{\otimes N}$.

\begin{remark}
Recall that under our assumptions Lemma \ref{formalsum} implies that the sheaf $E^{\llog}:=(L^{\llog})^{\otimes N}$ for some $N$ lies in $\oplus \mathbb Q[Z_i]$, whereas from (\ref{inj1}) it can also be written as a sum with $W(k)$ coefficients. Hence, $E^{\llog}_X$ lies actually in $\oplus \mathbb{Z}_{(p)}[Z_i]$.
Therefore there is $m \in \mathbb Z\setminus p\mathbb Z$ with the property that  $E_X^{\otimes m}\in  \oplus \mathbb Z[Z_i]$ is such that $E_U^{\otimes m} =  \mathcal O_U$ and $(m,p)=1$. This in turn defines a Kummer cover of $U$. However, by assumption we have that $\pi_1^{\op{tame,ab}}(U)=1$, so $m$ can be chosen as $1$ and $E_X$ is in fact in $\oplus \mathbb Z[Z_i]$ and has the property that $E_U$ is trivial (see Lemma \ref{Kummer} for  detailed argument).

On the other hand, from (\ref{inj2}) we see that the connection $\nabla^{\llog}_X$, as well as its restriction to $U$ can be assumed to be trivial.

We therefore have a module with  log connection on $X$ restricting to $(\cl{O}_U,d_U)$ on $U$ and can take an extension of it that is isomorphic to $(\cl{O}^{\llog}_X,d_X^{\llog})$.
\end{remark}
  
\section{Triviality of isocrystal}
 \begin{theorem}\label{triviality}
	Let $E$ a locally free lattice of a rank 1 log isocrystal $\mathcal E$ on $(X,Z)$ and such that its value on $(X,Z)$ is trivial. Then $E$ is the trivial log crystal. 
\end{theorem}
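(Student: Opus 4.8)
My plan is to prove triviality of the log crystal $E$ by successive approximation along the tower $W_n:=W/p^{n+1}$, lifting a horizontal trivializing section one infinitesimal step at a time, in the spirit of \cite[Proposition 3.6]{ES1}. First I would recast the statement: a rank $1$ log crystal on $(X/W)^{\llog}_{\op{crys}}$ is the same datum as a compatible system of locally free rank $1$ modules with integrable quasi-nilpotent log connection $(E_n,\nabla_n)$ on $(X/W_n)^{\llog}_{\op{crys}}$, and to say that $E$ is the trivial log crystal is to produce a generating horizontal section, i.e.\ an isomorphism $E\xrightarrow{\sim}(\cl{O}^{\llog}_X,d^{\llog}_X)$, compatibly over all $W_n$. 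By $p$-adic completeness it then suffices to construct horizontal generators $e_n$ of $E_n$ compatible under the reduction maps $W_{n+1}\to W_n$.

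For the base case $n=0$ one has $W_0=k$, and $(E_0,\nabla_0)$ is precisely the value of $E$ on $(X,Z)$, which is the trivial module with trivial connection by hypothesis; this furnishes a horizontal generator $e_0$. For the inductive step I would assume a horizontal generator $e_n$ of $(E_n,\nabla_n)$ is given and choose any generator $\tilde e$ of $E_{n+1}$ lifting it (such a lift of the underlying line bundle exists because the value of $E$ on $X$ is trivial and $H^1(X/W,1+\cl{J}^{\llog}_{X/W})^{\llog}_{\op{crys}}=0$ by Lemma \ref{chernlemma}, so the line bundle is trivial at each level). Since $e_n$ is horizontal, $\nabla_{n+1}(\tilde e)$ lies in $p^{n}E_{n+1}\otimes\omega^{1,\llog}_{X}$, so one may write $\nabla_{n+1}(\tilde e)=p^{n}\,\tilde e\otimes\omega$ with $\omega\in\omega^{1,\llog}_{X/k}$ well defined modulo $p$, and integrability forces $d\omega=0$. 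Replacing $\tilde e$ by $(1+p^{n}f)\tilde e$ for $f\in\cl{O}_X$ replaces $\omega$ by $\omega+df$, so the obstruction to choosing $\tilde e$ horizontal is the class $[\omega]\in H^1_{dR}((X,M)/k)$.

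This obstruction vanishes: by Remark \ref{observation} we have $H^1((X,M)/W)^{\llog}_{\op{crys}}=0$ and $H^1_{rig}(U/K)=0$, from which the mod-$p$ log de Rham group $H^1_{dR}((X,M)/k)$ vanishes as well (it is identified, via comparison and torsion-freeness, with the graded pieces of the log crystalline cohomology we have forced to be zero). More conceptually, the isomorphism classes of rank $1$ log crystals on $X/W_{n+1}$ lifting a fixed one on $X/W_n$ form a torsor under $H^1_{dR}((X,M)/k)\cong H^1((X,M)/k)^{\llog}_{\op{crys}}$, so the only lift of the trivial crystal is again trivial. Hence $e_n$ lifts to a horizontal generator $e_{n+1}$, and passing to the limit over $n$ produces a horizontal generator of $E$ over $W$, i.e.\ an isomorphism $E\cong(\cl{O}^{\llog}_X,d^{\llog}_X)$; thus $E$ is the trivial log crystal.

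The main obstacle is the inductive step, and it is twofold. One must ensure that the module-level generator $\tilde e$ can be chosen at each stage, which is the integral (line-bundle lifting) input and is where the injectivity of the value map (\ref{inj2}) on $H^1(\cl{O}^{*,\llog})^{\llog}_{\op{crys}}$ together with the vanishing from Lemma \ref{chernlemma} enters; and one must verify that the obstruction to horizontality genuinely lands in the log de Rham group that we have arranged to vanish. Carrying the log structure correctly through the de Rham complex and identifying the deformation group of the connection with $H^1$ of the log de Rham complex of $(X,M)/k$ requires some care, but no further geometric input beyond the cohomology vanishing already established in Section \ref{residuesection}. This deformation argument may be viewed as the object-level refinement of the cohomological injectivity (\ref{inj2}).
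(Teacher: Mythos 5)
Your setup reproduces the paper's Step 1: via Kato's equivalence between log crystals on $X/W_n$ and modules with integrable quasi-nilpotent log connection, deformations of the trivial log crystal across $W_n\to W_{n+1}$ are classified by cocycles $(\{s_a\},\{z_{ab}\})$ in the total log de Rham complex mod $p$, i.e.\ by $H^1((X,M)/W_1)^{\llog}_{\op{crys}}\cong H^1_{dR}((X,M)/k)$, and this matches the paper's class $e(E_2)$. The gap is your assertion that this mod-$p$ group vanishes. From Remark \ref{observation} and \cite{ABV} one gets that $H^1((X,M)/W)^{\llog}_{\op{crys}}$ is free and rationally zero, hence zero; but a universal-coefficients argument shows that $H^1((X,M)/W_1)^{\llog}_{\op{crys}}$ is an extension
\begin{equation*}
0\to H^1((X,M)/W)^{\llog}_{\op{crys}}\otimes_W k\to H^1((X,M)/W_1)^{\llog}_{\op{crys}}\to \op{Tor}_1^W\bigl(H^2((X,M)/W)^{\llog}_{\op{crys}},k\bigr)\to 0,
\end{equation*}
so it is controlled by $p$-torsion in $H^2$, which nothing in the hypotheses kills. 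Your parenthetical justification --- that the mod-$p$ group is ``identified, via comparison and torsion-freeness, with the graded pieces of the log crystalline cohomology we have forced to be zero'' --- is exactly where this fails: torsion-freeness of $H^1$ over $W$ does not exclude torsion in $H^2$, so the obstruction group in your inductive step can be nonzero and the step-by-step lifting stalls. The same issue invalidates the torsor remark: the torsor statement is fine, but the group need not be trivial.

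The paper's proof is built around precisely this difficulty, and its extra ingredients are what you would need to repair the argument. It extracts from $\varprojlim_n H^1(X/W_n)^{\llog}_{\op{crys}}=0$ only that the restriction maps $H^1(X/W_n)^{\llog}_{\op{crys}}\to H^1(X/W_1)^{\llog}_{\op{crys}}$ vanish for all $n\ge N$, and it disposes of the possibly nonzero mod-$p$ group using Frobenius: the semisimple part of $H^1(X/W_1)^{\llog}_{\op{crys}}$ embeds into $H^1(X,\cl{O}_X)_{ss}=H^1_{\text{\'et}}(X,\mathbb F_p)\otimes k=\operatorname{Hom}(\pi_1^{\op{ab}}(X),\mathbb F_p)\otimes k=0$, so $F^*$ acts nilpotently and some $(F^d)^*$ annihilates the deformation class $e(E_2)$. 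Iterating, $(F^r)^*E$ is trivialized up to level $N$, then for all levels using the vanishing of the restriction maps, and finally triviality descends from $(F^r)^*E$ to $E$ by the full faithfulness of $F^*$ on locally free crystals \cite[Ex. 7.3.4]{Ogus}. Note that this is a second, essential use of $\pi_1^{\op{ab}}(X)=1$, beyond the rational cohomology vanishing your proposal relies on; without the Frobenius nilpotency, the eventual vanishing of restriction maps, and the $F^*$-descent, the induction at the heart of your argument has no way to get past a nonzero obstruction group.
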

\begin{proof}
	
	\textbf{Step 1}:
	We have the following maps of fine log schemes:
	\begin{equation*}
	(X, Z)\rightarrow \underline S \rightarrow \underline W(k)
	\end{equation*} 
	where $\underline S$ is the log scheme defined by $S=\operatorname{Spec} k$ with the log structure associated to $\mathbb N\to k;1\mapsto 0$ and $\underline W(k)$ is the log scheme defined by $\operatorname{Spec } W(k)$ with the log structure $\mathbb N\to W(k);1\mapsto 0$.

 There is at least one good embedding system for $(X,Z)/(\operatorname{Spec} k,N)$, so we assume from now on that there is an embedding $(X,Z)\hookrightarrow (P,L)$ to a smooth formal scheme over $(\operatorname{Spf} W,N)$.
	
	Denote by $D$ the completed log PD envelope of $( X\hookrightarrow P$). It is the usual PD envelope equipped with the inverse image of the log structure. Denote by $ D_n$ the corresponding PD envelopes for the immersions over $P/\operatorname{Spf} W_n$. 
\begin{equation}	
	\begin{tikzcd}
		D_n\arrow{dr}\\
		 X \arrow{u}\arrow{r}\arrow{d}&  P_{W_n}\arrow{dl}\\
		W_n
	\end{tikzcd}\hfill .
\end{equation}	
	By  \cite[Theorem 6.2]{Kat} the category  $C_{\op{crys}}( X/W_n)^{\llog}$ of log crystals on $ X/W_n$ is equivalent to the category of $\mathcal O_{{\mathcal D_n}}$-modules with integrable log quasi nilpotent connection.
	
Let $E$ be a log crystal such that the restriction $E_n$ of $E_{n+1}$ to $C_{\op{crys}}(X/W_n)^{\llog}$ is the trivial log crystal.  We can define, as in \cite[Proposition 3.6]{ES1}, $\mathcal D$ to be the set of pairs $(G,\phi$) such that  $G\in C_{\op{crys}}(X/W_{n+1})^{\llog}$ with $\phi: \mathcal O_X \simeq \overline G\in C_{\op{crys}}(X/W_{n})^{\llog}$.
We claim that in this case we have an isomorphism  $e:\mathcal D\simeq H^1( X/W_1)^{\llog}_{\op{crys}}$.
By the equivalence of log crystals and modules with log integrable quasi nilpotent connection, we can identify $E_{n+1}$ with a  $\mathcal O_{\mathcal D_{n+1}}$ module $M_{n+1}$ with a log connection $\nabla_{n+1}$. By assumption, there is an isomorphism
 \begin{equation*} \phi:(\mathcal O_{D_n}, d) \to (M_n,\nabla_n),\end{equation*} $ (M_n,\nabla_n)$ being the restriction to $\op{MIC}^{\llog}(D_n)^{qn}\simeq C_{\op{crys}}(X/W_n)^{\llog}$.

Take an open affine cover $U=\{U_a\}$ of $D_{n+1}$ and an isomorphism $\psi_a:\mathcal O_{U_a}\to M_{n+1}|_{U_a}$ lifting $\phi|_{D_n\times U_a}$.
Then $\psi_a^*(\nabla_{n+1})$ defines a connection on $\mathcal O_{Ua}$ that is given as $d + p^ns_a$ with $s_a \in \Gamma(U_a,\omega^1_{\llog,D_1})$. 
	% where $\omega^1_{Y/S}=\Omega^1(log(M_Y/M_S))$ for two log schemes $Y,S$.
On the intersection $U_a\cap U_b=:U_{ab}$ the glueing $(\psi_a|_{U_{ab}})^{-1} \circ (\psi_b|_{U_{ab}})$ is given by $1+p^n z_{ab}$ with $z_{ab}\in \Gamma(U_{ab},\mathcal O_{D_1})$.

Because of the integrability of the connection and the compatibility of the connection with the glueing, we see as in \cite{ES1}, that $(\{s_a\},\{z_{ab}\})$ actually defines a 1-cocycle in 
$\op{Tot}\Gamma(U,\omega^\bullet_{\llog,D_1})$ and this defines a class  in the cohomology $H^1(\op{Tot}\Gamma(U,\omega^\bullet_{\llog,D_1}))=H^1( X/W_1)^{\llog}_{\op{crys}}$.
	
\textbf{Step 2}: By the previous step we get:
	\begin{equation*}
	\op{Ker}(\op{MIC}^{\llog}(D_{n+1})\to \op{MIC}^{\llog}(D_n))\simeq H^1(X,\omega^\bullet_{\llog,D_1}).
	\end{equation*}
	We know the following from \cite[Theorem 2.4.4 and Corollary  2.3.9]{ShII}, denoting by $K_{ X/W }$ the convergent isocrystal defined by $T\mapsto K\otimes_W \Gamma(T,\mathcal O_T)$: for all $i\in \mathbb N$
\begin{equation}
         H^i(( X/W)^{\llog}_{\op{crys}}, K\otimes \mathcal  O_{X/W})\simeq H^i(( X/W)^{\llog}_{conv},K_{ X/W })
\end{equation}	
and
\begin{equation}
  H^i(( X/W)^{\llog}_{conv},K_{ X/W }) \simeq H^i_{rig}(U/K).
  \end{equation}
In  \cite[page 136 ]{ShII}  Shiho defines for $\mathcal E=K\otimes \mathcal F$ in $\mathcal I_{\op{crys}}(X/W)^{\llog}$:
	
\begin{equation*}  H^1(( X/W)^{\llog}_{\op{crys}},\mathcal E)=\mathbb Q\otimes_\mathbb Z H^1(( X/W)^{\llog}_{\op{crys}},\mathcal F). \end{equation*}
	
Combining this with Remark \ref{observation}
 \begin{equation*}
  H^1(( X/W)^{\llog}_{\op{crys}}, K\otimes \mathcal O_{ X/W})=\mathbb Q \otimes H^1( X/W)^{\llog}_{\op{crys}}, \mathcal  O_{ X/W})= H^1_{rig}(U/K)=0.
   \end{equation*}
	
From \cite[Theorem B' and Section 2.3]{ABV}, we know that $H^1(X/W)^{\llog}_{\op{crys}}$ is a free $W$-module, hence torsion free and therefore $ H^1( X/W)^{\llog}_{\op{crys}}=\underset{n}{\varprojlim} H^1( X/W_n)^{\llog}_{\op{crys}}=0$. 
Hence there is a natural number $N$ large enough such that for all $n\ge N$,
the maps $H^1( X/W_n)^{\llog}_{\op{crys}}\to H^1( X/W_1)^{\llog}_{\op{crys}}$ are zero.
	
\begin{claim}: There is a natural number $d\ge 0$ s.t. $(F^d)^*H^1( X/W_1)^{\llog}_{\op{crys}}=0$.

From \cite[Corollary on page 143]{mum} we have the decomposition of 
$H^1(X/W_1)^{\llog}_{\op{crys}}$ as a direct sum of the part on which Frobenius acts as an isomorphism and  the part on which it acts nilpotently. Denote the former by $H^1(( X/W_1)^{\llog}_{\op{crys}})_{ss}$.
We have the following maps:
	\begin{equation*}
	\begin{tikzcd}
 H^0( X,\Omega^{\ge 1}_{ X})\arrow[d,hook]  \arrow[hookrightarrow]{r} & H^1_{\op{crys}}( X/k) \arrow{r}\arrow{d} & H^1({ X,\mathcal O_{ X}})\arrow{d}\\
 H^0( X,\omega^{\ge 1}_{\llog, X})\arrow[hookrightarrow]{r} & H^1( X/W_1)^{\llog}_{\op{crys}}\arrow{r} & H^1( X,\mathcal O_{ X})
	\end{tikzcd}\hfill .
      \end{equation*}
Since $F^*$ acts by zero on the image of $ H^0( X,\Omega^{1}_{ X})$ in $H^1_{\op{crys}}(X/k)$, it acts by zero on the image of $ H^0( X,\omega^{1}_{\llog, X})$ in    $H^1( X/W_1)^{\llog}_{\op{crys}}$ and therefore $H^1(( X/W_1)^{\llog}_{\op{crys}})_{ss}\subset H^1(X,\mathcal O_{ X})_{ss}=0$,
 the latter equality being true because:
\begin{equation*}	
  H^1( X,\mathcal O_{ X})_{ss}=H^1( X,\mathcal O_{ X})^{F=1}\otimes_{\mathbb F_p}	k=H^1_{\text{\'et}}(X,\mathbb F_p)\otimes k=\operatorname{Hom}(\pi_1^{\op{ab}}( X),\mathbb F_p)\otimes k=0.
  \end{equation*}
So, there exists some $d\in \mathbb N$ s.t.  $(F^d)^*H^1(X/W_1)^{\llog}_{\op{crys}}=0$.
\end{claim}

By the same computations as in Step 1, one can actually have:
	\begin{equation*}
	\operatorname{Ker}\left(\op{MIC}^{\llog}(D_{n+m})\to \op{MIC}^{\llog}(D_n)\right)\simeq H^1( X,\omega^\bullet_{\llog,D_m}), \text{ for all } 1\le m\le n.
	\end{equation*}
        Assume now that we have $E$ as in the assumption of the theorem.
        Then $E_2\in \operatorname{Ker}\left(\op{MIC}^{\llog}(D_2)\to \op{MIC}^{\llog}(D_1)\right)$, and this defines a class $e(E_2)\in H^1(X/W_1)^{\llog}_{\op{crys}}$. By the definition of $d$ above, we have $(F^d)^*e(E_2)=e((F^*)^d E)_2=0$. Hence, by Step 1, $((F^*)^d E)_2$ is trivial. Repeating this we find $r\in \mathbb N$ such that $((F^r)^* E)_N=(F^r)^*E_N=:E'_N$ is trivial.

	We also see that the image of $E'_{2N}$ via the restriction
	$H^1( X,\omega^\bullet_{\llog,D_N})\to H^1(X,\omega^\bullet_{\llog,D_1})$, which is equal to $E'_{N+1}$ is trivial.
	
	Continuing like this, we can show that $E'_n$ is trivial for all $n\ge N$, therefore $E'$ is trivial.
	The functor $F^*$ is fully faithful restricted to locally free crystals, so we conclude that $E$ is itself trivial.
	
\end{proof}

This proves that, with notations as in the previous sections, $E=(L^{\llog})^{\otimes N}$ is trivial and therefore that a power of the rank one crystal $L$ is trivial. Using the following two lemmas, we conclude that $L$ itself is trivial as well as the rank 1 isocrystal we started with.

\begin{lemma}\label{Kummer}
With the previous assumptions, if $L^{\otimes n}=1$ with $(n,p)=1$, then $L$ is trivial. 
\end{lemma}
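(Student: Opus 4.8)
The plan is to interpret the prime-to-$p$ torsion of $L$ as a tame abelian covering of $U$ and then to exploit $\pi_1^{\op{tame,ab}}(U)=1$. First I would fix an isomorphism $\phi\colon L^{\otimes n}\xrightarrow{\sim}\cl O$ of crystals and use it to make $\cl A:=\bigoplus_{i=0}^{n-1}L^{\otimes i}$ into a locally free $\cl O$-algebra of rank $n$, where the multiplication $L^{\otimes i}\otimes L^{\otimes j}\to L^{\otimes(i+j)}$ is followed by $\phi$ whenever $i+j\ge n$. Because $\phi$ is an isomorphism (so the trivializing section of $L^{\otimes n}$ is nowhere vanishing) and $(n,p)=1$, the covering $\pi\colon V\to U$ defined by the value of $\cl A$ on $U$ is finite \'etale of degree $n$, cyclic with Galois group $\mathbb Z/n$: this is the Kummer covering attached to $L$. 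Concretely, writing the connection on the trivialized line bundle $L_U$ as $d+\omega$, integrability together with the flatness of $\phi$ forces $n\omega=d\log f$ for a unit $f$ on $U$, and $V$ is obtained by adjoining an $n$-th root of $f$.

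Since $(n,p)=1$ the covering $\pi$ is automatically tame, and being cyclic it is abelian; as $k$ is algebraically closed we have $\mu_n\subset k^*$, so $\pi$ is classified by a homomorphism $\pi_1^{\op{tame}}(U)\to\mathbb Z/n$. This homomorphism factors through $\pi_1^{\op{tame,ab}}(U)=1$, whence $\pi$ is the trivial covering and $\cl A\cong\cl O^{\oplus n}$ with its grading split.

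It remains to descend this triviality to $L$ together with its connection. The splitting of $\cl A$ is equivalent to the existence on $U$ of an $n$-th root $g$ of $f$; then $\omega=\frac1n\,d\log f=d\log g$, where $\frac1n$ makes sense because $(n,p)=1$, so $e\cdot g^{-1}$ is a \emph{flat} nowhere-vanishing section of $L$, $e$ being the trivializing section of the underlying line bundle. A flat nowhere-vanishing section is exactly a trivialization of $L$ as a crystal, so $L\cong\cl O$. The main obstacle I anticipate is precisely the passage between the crystalline and the \'etale pictures: one must check that the cyclic cover manufactured from the crystal $L$ is genuinely detected by an honest finite \'etale covering of $U$, so that $\pi_1^{\op{tame,ab}}(U)=1$ applies, and then that the resulting triviality upgrades from the underlying line bundle to the crystal $L$ with its connection, which is what the horizontality of $e\cdot g^{-1}$ secures.
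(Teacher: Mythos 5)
Your first half is essentially the paper's own argument: the paper likewise forms the Kummer cover $\underline{\operatorname{Spec}}\bigl(\bigoplus_{i=0}^{n-1}L_U^{\otimes i}\bigr)\to U$, notes it is finite \'etale (hence tame, as $(n,p)=1$) and cyclic, and kills it with $\pi_1^{\op{tame,ab}}(U)=1$, obtaining $L_U\simeq\mathcal O_U$. One small wrinkle: your ``concretely'' computation writes the connection as $d+\omega$ on a trivialized $L_U$ \emph{before} the splitting of the cover is established, whereas a priori $L_U$ is only $n$-torsion in $\operatorname{Pic}(U)$; this is repairable by reordering (split the $\mu_n$-torsor first, then the section of the algebra trivializes $L_U$ and flatness of $\phi$ gives $n\omega=d\log f$, in fact $\omega=0$ for a suitable trivializing section). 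That is cosmetic.

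The genuine gap is your last step: ``a flat nowhere-vanishing section is exactly a trivialization of $L$ as a crystal.'' It is not. Your section $e\cdot g^{-1}$ is a horizontal section of the \emph{value} $(L_U,\nabla_U)$ of the crystal on the trivial thickening, and a module with integrable quasi-nilpotent connection on $U$ itself only determines the restriction of $L$ to $\op{Crys}(U/W_1)$; a crystal in $\op{Crys}(U/W)$ is equivalent (Kato, \cite[Theorem 6.2]{Kat}) to a compatible family of modules with connection on the PD envelopes $D_n$ over $W_n$, and the trivialization you produced must still be lifted through the whole $p$-adic tower. The obstruction to lifting from $W_n$ to $W_{n+1}$ is a class in $H^1(X/W_1)^{\llog}_{\op{crys}}$, which does not vanish for free; crystals with trivial value and trivial connection can be nontrivial. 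This is exactly why the paper, after concluding $L_U\simeq\mathcal O_U$, chooses a locally free log extension $L_X$ with trivial value on $(X,Z)$ and then invokes Theorem \ref{triviality}, whose deformation argument (Frobenius acting nilpotently on $H^1(X/W_1)^{\llog}_{\op{crys}}$, together with $H^1_{rig}(U/K)=0$, both resting on $\pi_1^{\op{ab}}(X)=1$) kills these obstruction classes level by level and uses full faithfulness of $F^*$ on locally free crystals. Without this step --- the main content of the theorem --- your argument proves only that $L/pL$ is the trivial crystal on $U/k$, not that $L$ is trivial in $\op{Crys}(U/W)$.
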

\begin{proof}
Since $L^{\otimes n} $ is trivial,
we have $L_{U}^{\otimes n} \simeq \mathcal{O}_{U}$ with $(n,p)=1$, hence 
\begin{equation*}
\underline{\operatorname{Spec}}\left(\bigoplus_{i=0}^{n-1}L_U^i\right)\to U
\end{equation*}
is a Kummer cover.

Since $\pi_1^{\op{tame,ab}}(U)=1$ this cover has to be trivial, so $n=1$ and $L_U\simeq \mathcal O_U$.
 Then we can choose a locally free extension $L_{X}$ that is trivial.
Applying the previous theorem to this, we get the desired result.
\end{proof}

Again with the previous notation we have the following:
\begin{lemma}\label{pPower}
  If $L^{\otimes p}=1$, $L$ is trivial.
\end{lemma}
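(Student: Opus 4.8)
The plan is to deduce the statement from the torsion--freeness, at the prime $p$, of the group that classifies rank $1$ crystals on $U$ arising from log crystals on $X$. This is precisely where the behaviour differs from the prime--to--$p$ situation of Lemma \ref{Kummer}: there the relevant residue quotient carries prime--to--$p$ torsion and one is forced to argue with Kummer covers, whereas at $p$ the residues live in $W(k)$, which has no $p$--torsion modulo $\mathbb Z$, so the conclusion drops out directly from the residue sequence of Section \ref{residuesection}.

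Concretely, I would first record the class $[L^{\llog}]$ of the chosen log extension of $L$ in $G:=H^1((X,M)/W,\mathcal O^*)^{\llog}_{\op{crys}}$ and invoke the injection $G\hookrightarrow \oplus W(k)[Z_i]$ from (\ref{inj1}), whose target is a free $W(k)$--module and in particular torsion free. Writing $\op{res}$ for the functor that restricts a log crystal on $X$ to a crystal on $U$, its kernel $\ker(\op{res})$ consists of the log crystals that become trivial on $U$; by the residue computation of \cite[Section 6]{ABV} reproduced in Section \ref{residuesection} these are exactly the classes with integral residue vector, so that $\ker(\op{res})$ corresponds under the injection to $\oplus \mathbb Z[Z_i]\subset \oplus W(k)[Z_i]$. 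The key arithmetic input is that $\oplus \mathbb Z[Z_i]$ is pure at $p$ inside $\oplus W(k)[Z_i]$: if $w\in W(k)$ and $n:=pw\in\mathbb Z$, then $w=n/p$ lies in $W(k)\cap\mathbb Q=\mathbb Z_{(p)}$, which forces $p\mid n$ and hence $w\in\mathbb Z$. Consequently the quotient $(\oplus W(k)[Z_i])/(\oplus\mathbb Z[Z_i])$ is $p$--torsion free, and since $G/\ker(\op{res})$ injects into it, so is the image of $\op{res}$.

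With this in hand the lemma is immediate: the hypothesis $L^{\otimes p}=1$ says that $(L^{\llog})^{\otimes p}$ restricts to the trivial crystal on $U$, i.e. $p\,[L^{\llog}]\in\ker(\op{res})$; since the image of $\op{res}$ is $p$--torsion free this forces $\op{res}([L^{\llog}])=0$, so $L$ is already trivial on $U$ and therefore trivial as a crystal. Alternatively, once one knows that the underlying line bundle $L_U$ is trivial, one may choose a trivial extension to $X$ and apply Theorem \ref{triviality}, exactly as in the proof of Lemma \ref{Kummer}; the two routes agree.

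I expect the only delicate point to be the identification of $\ker(\op{res})$ with the integral lattice $\oplus\mathbb Z[Z_i]$, that is, verifying that a log crystal on $X$ becomes trivial on $U$ precisely when its residue vector has integral entries. This is a bookkeeping statement about the residue maps of Section \ref{residuesection}, and it is what lets the single arithmetic fact about $W(k)$ carry the whole argument; everything else is formal. It is also exactly this identification that explains why the analogous step fails for $(n,p)=1$, since $\oplus\mathbb Z[Z_i]$ is not pure at primes $\ell\neq p$ in $\oplus W(k)[Z_i]$.
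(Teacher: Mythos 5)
Your proof is correct in outline, but it takes a genuinely different route from the paper. The paper's proof is a short Frobenius-descent argument: from $L^{\otimes p}=1$ one gets that the value of $F^*L$ on $U$ is $L_U^{\otimes p}\simeq\mathcal O_U$, one notes that $F^*L$ is still log extendable, chooses a trivial locally free log extension, applies Theorem \ref{triviality} to conclude that $F^*L$ is the trivial crystal, and then uses that $F^*$ is fully faithful on locally free crystals \cite[Ex.\ 7.3.4]{Ogus} to descend triviality to $L$ itself. So the paper recycles Theorem \ref{triviality} and a standard full-faithfulness statement and needs no further residue analysis, whereas you stay entirely inside the residue formalism of Section \ref{residuesection} and extract the lemma from the purity of $\oplus\mathbb Z[Z_i]$ at $p$ inside $\oplus W(k)[Z_i]$. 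Your route has the conceptual merit of explaining structurally why $p$-th powers behave differently from prime-to-$p$ powers (for $(n,p)=1$ one has $1/n\in\mathbb Z_{(p)}\subset W(k)$, so purity fails at $\ell\neq p$ and one genuinely needs the Kummer-cover argument of Lemma \ref{Kummer}), and your arithmetic step ($pw\in\mathbb Z$, $w\in W(k)\cap\mathbb Q=\mathbb Z_{(p)}$ forces $w\in\mathbb Z$) is sound; the paper's route has the merit of being soft and of working uniformly with Lemma \ref{Kummer} as the two halves of one induction.

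The one step you assert but do not prove --- that $\ker(\op{res})$ corresponds exactly to the integral classes $G\cap\oplus\mathbb Z[Z_i]$ --- is genuinely needed in both directions for your torsion-freeness argument, and you should check that it does follow from the paper's own ingredients rather than calling it bookkeeping. For the inclusion $\ker(\op{res})\subseteq\oplus\mathbb Z[Z_i]$: if a class restricts to the trivial crystal on $U$, its underlying bundle $L_X$ lies in $\ker(\operatorname{Pic}(X)\to\operatorname{Pic}(U))=\oplus\mathbb Z[Z_i]$ by the Picard sequence in the proof of Lemma \ref{formalsum} (this uses $H^0(U,\mathcal O_U^*)=k^*$), and then the commutative triangle of Section \ref{residuesection} together with the injectivity of $c_1^{\op{crys}}$ on $\oplus W(k)[Z_i]$ from (\ref{inj1}) identifies the residue vector with those integers. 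For the converse: an integral vector $(n_i)$ is realized by $\mathcal O_X(\sum n_iZ_i)$ with its canonical log connection, which is the unique class with that residue vector by the injection (\ref{inj1}) (equivalently, by (\ref{inj2}) the class is determined by its underlying bundle), and its canonical flat trivialization on $U$ shows it restricts to the trivial crystal. With these two verifications supplied, your argument is complete and even bypasses Theorem \ref{triviality} for this lemma; without them it is not yet a proof.
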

\begin{proof}
	If $L^{\otimes p}=1$ then as coherent sheaves $L^{\otimes p}_U\simeq \mathcal O_U$, thus $M_X:=(F^*L)_U\simeq \mathcal{O}_U$. We choose a locally free extension $M_{ X}$ that is the trivial log crystal and apply Theorem \ref{triviality} ($F^*L$ is also log extendable if $L$ is). We get that $F^*L$ is trivial. But the functor $F^*:\operatorname{Crys}(U/W)\to \operatorname{Crys}(U/W)$ restricted to locally free crystals is fully faithful \cite[Ex. 7.3.4]{Ogus}. Hence $L$ itself is trivial.
\end{proof}

\begin{corollary}\label{exttrivial}
  Extensions of the trivial object by itself in $\op{I}_{\op{crys}}(X/W)^{\llog}$ are trivial. In particular, log extendable unipotent isocrystals on $U$ are constant.
\end{corollary}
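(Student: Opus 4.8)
The plan is to reduce the corollary to the cohomology vanishing already recorded in Remark~\ref{observation} and in Step~2 of the proof of Theorem~\ref{triviality}. Denote by $\mathbf 1 = K\otimes\cl{O}_{X/W}$ the trivial object of $\op{I}_{\op{crys}}(X/W)^{\llog}$. The key point is the standard identification, in this abelian category, of the set of isomorphism classes of extensions
\begin{equation*}
0\to\mathbf 1\to\cl{E}\to\mathbf 1\to 0
\end{equation*}
with the group $\operatorname{Ext}^1(\mathbf 1,\mathbf 1)\simeq H^1((X/W)^{\llog}_{\op{crys}},K\otimes\cl{O}_{X/W})$.

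First I would make this identification explicit through the equivalence with modules equipped with an integrable quasi-nilpotent log connection used throughout the paper. An extension as above corresponds to a log connection on $\cl{O}^{\oplus 2}$ of the shape $\left(\begin{smallmatrix} d^{\llog} & \omega \\ 0 & d^{\llog}\end{smallmatrix}\right)$; integrability forces $\omega$ to be a closed log one-form, and changing the chosen splitting alters $\omega$ by an exact form. Thus the class of the extension is precisely the class of $\omega$ in the first log-de Rham (equivalently, rigid) cohomology, and the assignment $\cl{E}\mapsto[\omega]$ is the claimed isomorphism onto $H^1_{rig}(U/K)$.

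Then I would invoke Remark~\ref{observation}, where under the running hypotheses one has $H^1_{rig}(U/K)\simeq H^1(X/W)^{\llog}_{\op{crys}}\otimes\mathbb Q=0$. Hence $\operatorname{Ext}^1(\mathbf 1,\mathbf 1)=0$, every such extension splits, and $\cl{E}\simeq\mathbf 1\oplus\mathbf 1$ is trivial; this proves the first assertion. For the statement on unipotent isocrystals I would argue by induction on the length $n$ of a unipotent filtration $0=\cl{E}_0\subset\cl{E}_1\subset\cdots\subset\cl{E}_n=\cl{E}$ with trivial graded pieces. The case $n\le 1$ is immediate. For the inductive step, $\cl{E}_{n-1}$ is unipotent of length $n-1$, hence constant by induction, so $\cl{E}$ is an extension of $\mathbf 1$ by the constant object $\mathbf 1^{\oplus(n-1)}$; since $\operatorname{Ext}^1(\mathbf 1^{\oplus a},\mathbf 1^{\oplus b})\simeq\operatorname{Ext}^1(\mathbf 1,\mathbf 1)^{\oplus ab}=0$, this extension splits and $\cl{E}$ is constant. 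Restricting to $U$ yields the claim for log extendable unipotent isocrystals on $U$.

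The hard part will be the first, bookkeeping step: pinning down that $\operatorname{Ext}^1$ in the category of log isocrystals is computed by the log-crystalline (equivalently rigid) $H^1$ with trivial coefficients, and verifying that the integrability condition together with the equivalence-of-extensions relation cut out exactly the class in $H^1_{rig}(U/K)$ and not a larger or smaller group. Once this identification is secured, the vanishing from Remark~\ref{observation} makes the remaining steps routine.
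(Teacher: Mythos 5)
Your proof is correct, and for the first assertion it coincides with the paper's: both identify isomorphism classes of extensions of the trivial object by itself with $\op{Ext}^1(\mathbf 1,\mathbf 1)\simeq H^1((X,M)/W,\mathcal O_{X/W}\otimes \mathbb Q)^{\llog}_{\op{crys}}\simeq H^1_{rig}(U/K)$ and kill this group via Remark \ref{observation}. Where you genuinely differ is the unipotent part, and there your route is tighter. The paper restricts each graded quotient $\mathcal E_i$ (an extension of the trivial log isocrystal by itself on $X$) to an extension in $\op{Crys}(U/W)_{\mathbb Q}$, by evaluating on log PD thickenings and restricting them to $U$, and then applies the first part to conclude the $\mathcal E_i$ are trivial --- and stops there; strictly speaking, triviality of the graded pieces alone does not yet yield constancy of $\mathcal E$. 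Your induction on the length of the filtration, carried out entirely in $\op{I}_{\op{crys}}(X/W)^{\llog}$ using $\op{Ext}^1(\mathbf 1,\mathbf 1^{\oplus(n-1)})=\op{Ext}^1(\mathbf 1,\mathbf 1)^{\oplus(n-1)}=0$ and only then restricting to $U$, supplies exactly the iterated splitting the paper leaves implicit, at the price of assuming the whole filtration (not just $\mathcal E$) extends to the log compactification --- an assumption the paper also makes tacitly in the sentence ``if $\mathcal E$ is log extendable, then so are its associated graded quotients.'' One caveat on your ``explicit'' step: the underlying module of an extension of $\mathbf 1$ by $\mathbf 1$ need not be globally free, so the class is not simply a global closed log one-form $\omega$ modulo exact forms; that description only captures the sub of $H^1$ coming from $H^0$ of closed log one-forms. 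The correct identification is with the hypercohomology of the log de Rham complex, i.e.\ a cocycle consisting of local forms $s_a$ together with \v{C}ech gluing data $z_{ab}$ in the total complex, exactly as in Step 1 of the proof of Theorem \ref{triviality}. Since the whole group $H^1_{rig}(U/K)$ vanishes under the running hypotheses, this inaccuracy does not affect your conclusion, but the identification should be stated in that \v{C}ech--de Rham form.
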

\begin{proof}
  The group of log isocrystals $\mathcal E$ on $(X,M)/W$ for which the sequence
  \begin{equation}
0\to \mathcal O_{X/W}\otimes \mathbb Q \to \mathcal E\to \mathcal O_{X/W}\otimes \mathbb Q \to 0
   \end{equation} 
   is exact, is precisely $\op{Ext}^1(\mathcal O_{X/W}\otimes \mathbb Q,\mathcal O_{X/W}\otimes \mathbb Q) = H^1((X,M)/W,\mathcal O_{X/W}\otimes \mathbb Q)^{\llog}_{\op{crys}}$. However, \begin{equation}H^1((X,M)/W,\mathcal O_{X/W}\otimes \mathbb Q)^{\llog}_{\op{crys}}=H^1((X,M)/W,\mathcal O_{X/W})^{\llog}_{\op{crys}} \otimes \mathbb Q \simeq H^i_{rig}(U/K) =0. \end{equation}
   Let now $\mathcal E$ be a unipotent isocrystal on $U$, i.e. an isocrystal that admits a filtration whose associated graded quotients are extensions of the unit isocrystal by itself. If $\mathcal E$ is log extendable, then so are its associated graded quotients, which we denote by  $\mathcal E_i$. They fit into exact sequences 
   \begin{equation}\label{extlog}
0\to \mathcal O_{X/W}\otimes \mathbb Q \to \mathcal E_i \to \mathcal O_{X/W}\otimes \mathbb Q \to 0
\end{equation} in  $\op{I}_{\op{crys}}(X/W)^{\llog}$. Then we also have exact sequences in $\op{Crys}(U/W)_{\mathbb Q}$
  \begin{equation}
0\to\mathcal O_{U/W}\otimes \mathbb Q \to \mathcal E_i \to \mathcal O_{U/W}\otimes \mathbb Q \to 0.
\end{equation}
Indeed, (\ref{extlog}) being exact means that for all log PD thickenings $((V,M_V),(T,M_T))$ in the log crystalline site of $X/W$, we have
 \begin{equation}
0\to\mathcal O_T\otimes \mathbb Q \to \mathcal E_{i,T} \to \mathcal O_T\otimes \mathbb Q \to 0.
\end{equation}
The log PD thickenings  $((V,M_V),(T,M_T))$ restrict to PD thickenings $(V,T)$ on $X$ and they in turn define PD thickenings $(V\cap U,T)$ on $U$. Every open of $U$ is an open in $X$, thus we have the above exact sequence in $\op{Crys}(U/W)_{\mathbb Q}$. So $\mathcal E_i$ is itself an extension of the trivial isocrystal on $U$ by itself and its extension to $X$ is trivial. Applying the above theorem to this, we obtain that the $\mathcal E_i$ are trivial on $\op{Crys}(U/W)_{\mathbb Q}$.
  \end{proof}


\begin{thebibliography}{00}


\bibitem{ABV} 
Andreatta F. and  Barbieri-Viale L. ``Crystalline realizations of 1-motives.'' \textit{Math. Ann.} 331, no. 1 (2005): 111--172.

\bibitem{BO1} 
Berthelot P. and Ogus A. 
{\it Notes on crystalline cohomology. }
Princeton University Press, 1978.

\bibitem{BO2}
Berthelot P. and Ogus A. `` F-Isocrystals and De Rham Cohomology I.''
\textit{Invent. math.} 72 (1983): 159--199.

\bibitem{chern}
  Berthelot P. and Illusie L. 
`` Classes de Chern en cohomologie cristalline.'' 
\textit{Comptes Rendus Acad. Sciences Paris} 270 (1970): 
1695--1697 and 1750--1752.

\bibitem{Deligne}  P. Deligne, {\it Equations Differ\'entielles \`a  Points Singuliers R\'eguliers.} Lecture Notes in Math. 163, Springer-Verlag, 1970.

\bibitem{EsVg}Esnault H. and Viehweg E. {\it Lectures on Vanishing Theorems.} Lecture Notes, Birkh\"auser, DMV Seminar 20 (1992).
  
  \bibitem{ES1} Esnault H. and Shiho A. `` Convergent isocrystals on simply connected varieties.''\textit{ Preprint} (2016).

\bibitem{ES2} Esnault H. and Shiho A. `` Chern classes of crystals.''\textit{ Preprint} (2015).

\bibitem{Esfund} Esnault H. ``Some fundamental groups in arithmetic geometry.'' \textit{Preprint} (2016).

\bibitem{esnmeh} 
Esnault H. and  Mehta V. ``Simply connected projective manifolds
in characteristic $p > 0$ have no nontrivial stratified
bundles.'' \textit{Invent. math.}  181 (2010): 449--465. 
(Erratum available at {\tt www.mi.fu-berlin.de/users/esnault/helene\_publ.html, 95b }
(2013)). 

\bibitem{gie}
 Gieseker D. ``Flat vector bundles and the fundamental group in non-zero characteristics.'' \textit{ Ann. Sc. Norm. Super. Pisa} 4 S\'er. 2, no. 1 (1975): 1--31. 

 \bibitem{gro} 
 Grothendieck A.
``Repr\'esentations lin\'eaires et 
compactifications profinies des groupes discrets.''
\textit{Manuscr. Math.} 2 (1970): 375--396.

\bibitem{HyKa}Hyodo O. and  Kato K. `` Semi-stable reduction and crystalline cohomology with logarithmic poles.'' \textit{ Ast\'erisque} 223 (1993): 221--268.


\bibitem{ill}Illusie L. ``Complexe de de Rham-Witt et cohomologie cristalline.'' \textit{Publ. Sc. \'E.N.S.} 12, 4i\`eme s\'erie (1979): 501--661.

\bibitem{Kat}  Kato K. ``Logarithmic structures of Fontaine-Illusie.''\textit{ Algebraic Analysis, Geometry and Number Theory} (1989): 191-224.  

\bibitem{kedlI}
    Kedlaya K. ~ S. ``Semistable reduction for overconvergent $F$-isocrystals, I$:$ 
   Unipotence and logarithmic extensions.'' \textit{ Compositio Math.} 143 (2007): 1164--1212.
 
\bibitem{evidence} Kindler L. ``Evidence for a generalisation of Gieseker's conjecture on stratified bundles in positive characteristic.'' \textit{ Documenta Math.} 18 (2013): 1215-1242.


\bibitem{mal}
 Mal\v{c}ev A. ``On isomorphic matrix representations of infinite groups.''
\textit{Mat. Sb. N.S.} 8, no. 50 (1940): 405--422.

\bibitem{mum}
Mumford D. {\it
Abelian varieties.} Oxford University Press, London, 1970.

\bibitem{nak}
Nakkajima Y. and  Shiho A. 
{\it Weight-filtered convergent complex.} Lecture Notes in Mathematics, Springer Berlin Heidelberg (2008).

\bibitem{Ogus} 
Ogus A. 
``F -crystals, Griffiths transversality, and the Hodge decomposition.'' 
\textit{Ast\'erisque} 221 (1994).   

\bibitem{Petr}
 Petrequin D.
``Classes de Chern en cohomologie rigide.'' \textit{Preprint} (2001).

\bibitem{ShI}
Shiho A.
   ``Crystalline Fundamental Groups {I} ---
    Isocrystals on Log Crystalline Site and Log Convergent Site.''\textit{  J. ~Math. ~Sci. ~Univ. 
    ~Tokyo} 7 (2000): 509--656.

\bibitem{ShII}
  Shiho A. 
  ``Crystalline fundamental groups {II} --- 
  Log convergent cohomology and rigid cohomology.''
 \textit{ J. ~Math. ~Sci. ~Univ. ~Tokyo} 9 (2002):  1--163.
\bibitem{Sza}
Spie\ss, M. and Szamuely, T, ``On the Albanese map for smooth quasi-projective varieties.'' 
\textit{Math. Ann. } 325 no. 1 (2003): 1-17. 

\end{thebibliography}
\end{document}